\documentclass[a4paper,11pt,french,english]{amsart}
\usepackage[english]{babel}
\usepackage[utf8]{inputenc}
\usepackage{lmodern}
\usepackage[T1]{fontenc}
\usepackage{amsmath,amsthm,amssymb,amsfonts}
\usepackage{amscd}
\usepackage{pstricks}
\usepackage{pst-node}
\usepackage[a4paper,left=3.4cm,right=3.4cm,top=4cm,bottom=4cm]{geometry}
\usepackage{enumerate}
\usepackage{enumitem}
\usepackage[linktocpage=true]{hyperref}
\usepackage{url}
\usepackage{csquotes}

\newcommand{\Sy}{\mathrm{Sym}(\Omega)}
\newcommand{\Syd}{\mathrm{Sym}(d)}

\newcommand{\aut}{\mathrm{Aut}(T_d)}

%

%
%
%
%
%
%
%
%
%


\title{Simple groups and irreducible lattices in wreath products}

\author{Adrien Le Boudec}
\thanks{This work was carried out when the author was F.R.S.-FNRS Postdoctoral Researcher. Current affiliation: CNRS, UMPA - ENS Lyon. Partially supported by ANR-14-CE25-0004 GAMME}
\address{UCLouvain, IRMP,	Chemin du Cyclotron 2, 1348 Louvain-la-Neuve, Belgium}
\address{CNRS, Unité de Mathématiques Pures et Appliquées,	ENS-Lyon, France}
\email{adrien.le-boudec@ens-lyon.fr}
\date{January 22, 2020}

\theoremstyle{plain}
\newtheorem{thm}{Theorem}[section]
\newtheorem{prop}[thm]{Proposition}
\newtheorem{cor}[thm]{Corollary}
\newtheorem{lem}[thm]{Lemma}

\newtheorem*{thm-intro}{Theorem}

\theoremstyle{definition}
\newtheorem{defi}[thm]{Definition}

\newtheorem{rmq}[thm]{Remark}

\begin{document}

\maketitle

\begin{abstract}
We consider the finitely generated groups acting on a regular tree with almost prescribed local action. We show that these groups embed as cocompact irreducible lattices in some locally compact wreath products. This provides examples of finitely generated simple groups quasi-isometric to a wreath product $C \wr F$, where $C$ is a finite group and $F$ a non-abelian free group.
\end{abstract}



\section{Introduction}

Let $W_d$ be the free product of $d \geq 3$ copies of the cyclic group $C_2$ of order $2$. If $x_1,\ldots,x_d$ are generators of the copies of $C_2$, the Cayley graph of $W_d$ with respect to $X = \left\{x_1,\ldots,x_d\right\}$ is a $d$-regular tree $T_d$. Given a finite group $A$ of cardinal $n \geq 2$, let $A \wr W_d$ be the restricted wreath product of $A$ and $W_d$, and denote by $\mathcal{C}_{n,d}$ the Cayley graph of $A \wr W_d$ with respect to the generating subset $S = A \cup X$. The graph $\mathcal{C}_{n,d}$ is sometimes called the \textbf{lamplighter graph} over $T_d$. Note that the graph $\mathcal{C}_{n,d}$ does not really depend on the group $A$, but only on the cardinality of $A$. The goal of this short article is to show that that certain finitely generated groups $\Gamma \leq \aut$ acting non-properly on $T_d$, also admit a proper and cocompact action on the graph $\mathcal{C}_{n,d}$.

To any permutation group $F \leq \Syd$, there is an associated group of tree automorphisms $G(F)$ with local action prescribed almost everywhere by $F$. The group $G(F)$ consists of all $g \in \aut$ such that the local permutation of $g$ at the vertex $v$ belongs to $F$ for all but finitely many $v$. More generally given $F \lneq F' \leq \Syd$, $G(F,F')$ is the group tree automorphisms with local action prescribed everywhere by $F'$ and  almost everywhere by $F$. See \S \ref{subsec-thegroups} for formal definitions. When the permutation group $F$ is semi-regular, the group $G(F,F')$ is a finitely generated group. Recall that a permutation group is \textbf{semi-regular} if it has trivial point stabilizers. The action of $G(F,F')$ on the tree is not proper, and stabilizers of vertices are infinite locally finite subgroups. This family of groups share some common properties with the irreducible lattices in the product of two trees constructed by Burger--Mozes \cite{BM-IHES-2}: both families contain instances of groups that embed densely in some universal group \cite[\S 3.2]{BM-IHES} and that are virtually simple. Here we show that the groups $G(F,F')$ also embed as irreducible lattices in some non-discrete locally compact groups, but these are wreath products instead of direct products.

If $H$ is a group acting on a set $X$, and $A$ is a subgroup of a group $B$, the \textbf{semi-restricted permutational wreath product} $B \wr_X^A H$, introduced by Cornulier in \cite{Cor-semi-wreath}, is the semi-direct product $B^{X,A} \rtimes H$, where $B^{X,A}$ is the set of functions $f: X \rightarrow B$ such that $f(x) \in A$ for all but finitely many $x \in X$, and $H$ acts on $B^{X,A}$ in the usual way: $(h \cdot f)(x) = f(h^{-1}x)$. This definition somehow interpolates between the restricted and the unrestricted permutational wreath products, which correspond respectively to $A=1$ (in which case we will write $B \wr_X H$) and $A=B$. When $B,H$ are locally compact and $A$ is compact open in $B$, there is a natural locally compact group topology on $B \wr_X^A H$ (see \S \ref{subsub-wr}).

We call a lattice $\Gamma$ in $B \wr_X^A H$ \textbf{irreducible} if $\Gamma$ has a non-discrete projection to $H$. The terminology is motivated by the fact that this definition prevents $\Gamma$, and more generally any subgroup commensurable with $\Gamma$, from being of the form $\Gamma_1 \rtimes \Gamma_2$, where $\Gamma_1$ and $\Gamma_2$ are lattices in $B^{X,A}$ and $H$.

\begin{defi}
For $n \geq 2$, we denote by $G_{n,d}$ the semi-restricted wreath product $G_{n,d} = \mathfrak{S}_{n} \wr_{T_d}^{\mathfrak{S}_{n-1}} \aut$, where $\mathfrak{S}_{k}$  is the symmetric group on $k$ elements.
\end{defi}

The natural action of the unrestricted wreath product $ \mathfrak{S}_{n} \wr_{T_d}^{\mathfrak{S}_{n}} \aut$ on the set of functions $T_d \to \left\{1,\ldots,n\right\}$ induces an action of the group $G_{n,d}$ by graph automorphisms on $\mathcal{C}_{n,d}$, and this action is proper and cocompact (see Proposition \ref{prop-act-Gnd-X}).


Our main result is the following:

\begin{thm} \label{thm-intro-coc-and-graph}
Let $d \geq 3$, $F \lneq F' \leq \Syd$ permutation groups such that $F$ is semi-regular, and $n$ the index of $F$ in $F'$. Then the group $G(F,F')$ embeds as an irreducible cocompact lattice in the semi-restricted permutational wreath product $G_{n,d}$. So the group $G(F,F')$ acts properly and cocompactly on the graph $\mathcal{C}_{n,d}$.
\end{thm}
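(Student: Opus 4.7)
My plan is to build an explicit homomorphism $\phi:G(F,F')\to G_{n,d}$ and then to verify in turn that it is injective, that its image is discrete, that its image admits a compact fundamental domain, and that its composition with the projection $G_{n,d}\to\aut$ has non-discrete image. The building block for $\phi$ is the left-translation action of $F'$ on the $n$-element coset space $F'/F$. Fixing a bijection $F'/F\simeq\{1,\dots,n\}$ sending the trivial coset to $1$ yields an embedding $\iota:F'\hookrightarrow\mathfrak{S}_n$ with $\iota(F)\subseteq\mathfrak{S}_{n-1}=\mathrm{Stab}_{\mathfrak{S}_n}(1)$. For $g\in G(F,F')$ I set $c_g(v)=\iota(\sigma(g,g^{-1}v))$ and define $\phi(g)=(c_g,g)$. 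Because $\sigma(g,\cdot)\in F$ outside a finite set, the function $c_g$ takes values in $\mathfrak{S}_{n-1}$ outside a finite set, so it lies in $\mathfrak{S}_n^{T_d,\mathfrak{S}_{n-1}}$. The cocycle identity $\sigma(gh,v)=\sigma(g,hv)\sigma(h,v)$ rearranges to $c_{gh}(v)=c_g(v)\,c_h(g^{-1}v)$, which is precisely the semidirect-product multiplication in $G_{n,d}$; so $\phi$ is a homomorphism, and injectivity is read off from the second coordinate.

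For discreteness I rely on the fact that when $F$ is semi-regular the Burger--Mozes universal group $U(F)=G(F,F)$ is a discrete subgroup of $\aut$ with vertex stabilisers of cardinality $|F|$. Indeed, given $g\in U(F)$, the edge-consistency relation $\sigma(g,u)(i)=\sigma(g,u')(i)$ at the common edge of two neighbouring vertices $u,u'$ (labelled $i$ at both endpoints in the Cayley-graph labelling of $T_d$ by $W_d$), combined with semi-regularity of $F$, determines $\sigma(g,u)\in F$ uniquely from $\sigma(g,u')$; consequently $g$ is pinned down by the data $(g(v_0),\sigma(g,v_0))\in T_d\times F$. Let $V\le G_{n,d}$ be the basic compact open subgroup consisting of pairs $(f,h)$ with $h$ fixing the ball $B(v_0,1)$ pointwise, $f$ valued in $\mathfrak{S}_{n-1}$, and $f|_{B(v_0,1)}\equiv 1$. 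Unwinding the definition of $\phi$, an element $g\in\phi^{-1}(V)$ must satisfy $\sigma(g,u)\in F$ for every $u$ (hence $g\in U(F)$), fix $B(v_0,1)$ pointwise, and have $\sigma(g,u)=1$ on $B(v_0,1)$; the rigidity of $U(F)$ just described then forces $g=1$.

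For cocompactness I aim to prove that $\phi(G(F,F'))$ acts transitively on the vertex set of $\mathcal{C}_{n,d}$; in combination with Proposition \ref{prop-act-Gnd-X}, which identifies the stabiliser of $(1,v_0)$ in $G_{n,d}$ with the compact open subgroup $\mathfrak{S}_{n-1}^{T_d}\rtimes\aut_{v_0}$, this yields a compact fundamental domain. Given a target vertex $(\psi,v)$, the equation $\phi(g)\cdot(1,v_0)=(\psi,v)$ translates into $g(v_0)=v$ together with the requirement that for each $u\in T_d$ the local permutation $\sigma(g,u)$ lie in the $F$-coset of $F'$ representing $\psi(gu)\in\{1,\dots,n\}\simeq F'/F$. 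Since $\psi$ has finite support, this prescribes a non-trivial coset at finitely many vertices and the trivial coset $F$ elsewhere, so constructing $g$ reduces to realising an almost arbitrary profile of $F$-cosets of local permutations by an element of $G(F,F')$---propagating outward from $v_0$, using semi-regularity of $F$ to lift each trivial-coset local permutation uniquely to an element of $F$, and exploiting the freedom in $F'\setminus F$ at the prescribed exceptional vertices. This realisation step is where I expect the bulk of the technical work.

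Irreducibility is the easy step: the second-coordinate projection of $\phi(G(F,F'))$ is $G(F,F')\le\aut$, and because $F\lneq F'$ there exists $g_0\in G(F,F')$ whose local action is in $F'\setminus F$ at a single vertex; conjugating $g_0$ by a sequence of elements of $W_d\le G(F,F')$ moving that exceptional vertex arbitrarily far from $v_0$ produces a sequence of non-trivial elements of $G(F,F')$ converging to the identity in $\aut$, so the projection is non-discrete. The last sentence of the theorem then follows immediately from the cocompact lattice embedding together with Proposition \ref{prop-act-Gnd-X}.
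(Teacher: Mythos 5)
Your embedding is exactly the paper's: the cocycle $c_g(v)=\iota(\sigma(g,g^{-1}v))$ twisted into the semidirect product, verified via the identity (\ref{eq:loc-perm}), with discreteness of the image read off from discreteness of $U(F)$ for semi-regular $F$ and cocompactness reduced, via Proposition \ref{prop-act-Gnd-X}, to transitivity of the image on the vertices of the lamplighter-type graph. The problem is that the one step carrying the real content of the theorem --- realising an arbitrary finitely supported profile of $F$-cosets as the local-permutation cosets of an element of $G(F,F')$ --- is precisely the step you defer (``this is where I expect the bulk of the technical work''), and your sketched propagation does not go through as stated. When you extend outward across an edge of color $i$ from a vertex $w$ where $\sigma(g,w)$ has already been chosen, you must find $\sigma(g,u)$ in the prescribed coset with $\sigma(g,u)(i)=\sigma(g,w)(i)$. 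Semi-regularity of $F$ gives \emph{uniqueness} of such a lift inside $F$, but not \emph{existence}: an element of $F$ sending $i$ to $j$ exists only if $j$ lies in the $F$-orbit of $i$, and after passing an exceptional vertex (where $\sigma(g,w)\in F'\setminus F$) this can fail. This is exactly where the paper's standing hypothesis that $F'$ preserves the $F$-orbits enters (\S\ref{subsec-thegroups}, and the decomposition $F'=FF'_a$ in the proof of Proposition \ref{prop-g(f,f')-trans-som}): the coset correction at the exceptional vertex must be made by an element of $F'_a$ fixing the color pointing back toward the base edge, and the remaining adjustments by elements $\sigma_b\in F$ with $\sigma_b(b)=\sigma(b)$, whose existence is guaranteed only by orbit preservation. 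Your proposal never invokes this hypothesis, so the ``realisation step'' is a genuine gap, not just omitted bookkeeping; the paper fills it by an induction on the size of the support of $f$, peeling off the support vertex farthest from the base edge.

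A second, smaller flaw: your irreducibility argument does not work as written. Conjugating a fixed $g_0$ by elements of $W_d$ so that its unique exceptional vertex drifts away from $v_0$ does not make the conjugates converge to the identity in $\aut$: the local permutations of $g_0$ near the exceptional vertex's complement lie in $F$ but need not be trivial (indeed an element with exactly one non-trivial local permutation does not exist), so the conjugates need not fix large balls around $v_0$. What you need are non-trivial elements fixing pointwise arbitrarily large half-trees containing $v_0$, and constructing those again uses $F'_a\neq 1$ together with orbit preservation --- i.e.\ the same realisation machinery. The paper sidesteps this entirely by quoting that the closure of $G(F,F')$ in $\aut$ is $U(F')$ (\cite[Prop.~3.5]{LB-ae}), which is non-discrete under the standing hypothesis, so the projection to $\aut$ is non-discrete. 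Your discreteness argument, by contrast, is essentially correct (the condition $c_g\equiv 1$ on $B(v_0,1)$ only forces $\sigma(g,\cdot)$ into the core of $F$ in $F'$, not to be trivial, but fixing $B(v_0,1)$ pointwise already gives $\sigma(g,v_0)=1$, which together with $g\in U(F)$ and semi-regularity pins $g$ down to the identity).
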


It immediately follows from the theorem that the group $G(F,F')$ is quasi-isometric to $C_n \wr W_d$, where $C_n$ is the cyclic group of order $n$, since they both act properly and cocompactly on the same graph.

The embedding of $G(F,F')$ in $G_{n,d} = \mathfrak{S}_{n}^{V_d,\mathfrak{S}_{n-1}} \rtimes \aut$ is not the inclusion in the subgroup $1 \rtimes \aut$, but a twisted embedding associated to the cocycle given by the local action on $T_d$. See Section \ref{sec-proof} for details.

The case $n=2$ is particular as the group $G_{2,d}$ is actually a restricted wreath product, and in this situation the group $G(F,F')$ is an irreducible cocompact lattice in $C_2 \wr_{V_d} \aut = \left( \oplus_{V_d} C_2 \right) \rtimes \aut$. 

\begin{rmq}
The groups $G(F,F')$ are instances of countable discrete groups $\Gamma$ with a continuous Furstenberg uniformly recurrent subgroup (URS). Lattice embeddings for this class of groups (i.e.\ given such a group $\Gamma$, study the locally compact groups that can contain a copy of $\Gamma$ as a lattice) are studied in \cite{LB-am-urs-lat}; and we refer the reader to \cite{LB-am-urs-lat} for the above terminology. So on the one hand Theorem \ref{thm-intro-coc-and-graph} motivates the problems considered in \cite{LB-am-urs-lat}, and on the other hand all the results obtained in \cite{LB-am-urs-lat} apply to the family of groups $G(F,F')$ (see notably Corollary 1.3 in \cite{LB-am-urs-lat}). 
\end{rmq}

\subsection*{Applications}

Recall that the property of being virtually simple is not invariant by quasi-isometry. Indeed the lattices constructed by Burger and Mozes in \cite{BM-IHES-2} show that a virtually simple finitely generated group may have the same Cayley graph as a product of two finitely generated free groups. Theorem \ref{thm-intro-coc-and-graph} together with simplicity results from \cite[\S 4.2]{LB-ae} (see also \cite[Prop.\ 6.8]{LB-am-urs-lat}) provide another illustration of this fact, namely finitely generated simple groups having the same Cayley graph as a wreath product. The wreath product construction is already known to be a source of examples of finitely generated groups whose algebraic properties are not reflected in their Cayley graphs. Two wreath products $B_1 \wr \Gamma$ and $B_2 \wr \Gamma$ may have isometric or bi-Lipschitz Cayley-graphs, one being solvable or torsion free, while no finite index subgroup of the second has these properties \cite{Ersch-solv-noQI}. The phenomenon that we exhibit here is nonetheless very different, in the sense that it provides finitely generated groups with isometric Cayley graphs such that one is a wreath product, but the other is simple (and hence not commensurable with a wreath product). See Theorem \ref{thm-coc-and-graph}.

Recall that for finitely generated groups, being amenable is an invariant of quasi-isometry, and an invariant of measure equivalence. By contrast, Theorem \ref{thm-intro-coc-and-graph} implies:

\begin{cor} \label{cor-intro}
Among finitely generated groups, the property of having infinite amenable radical is invariant neither by quasi-isometry nor by measure equivalence.
\end{cor}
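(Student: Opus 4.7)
The plan is to derive the corollary directly from Theorem~\ref{thm-intro-coc-and-graph}, by contrasting the amenable radical of $\Gamma := G(F,F')$ with that of $\Lambda := C_n \wr W_d$ after choosing $(F,F')$ so that $\Gamma$ is virtually simple.

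First I would pick $F \lneq F' \leq \Syd$ with $F$ semi-regular and $n := [F':F] \geq 2$ such that $\Gamma$ is virtually simple; the existence of such pairs is supplied by the simplicity results cited above from \cite{LB-ae} and \cite{LB-am-urs-lat}. I then compare amenable radicals. On the lamplighter side, the base $\bigoplus_{W_d} C_n \trianglelefteq \Lambda$ is infinite, normal and locally finite, so it is amenable and the amenable radical of $\Lambda$ is infinite. On the tree side, $\Gamma$ acts non-elementarily on $T_d$, so it contains a non-abelian free subgroup and is non-amenable. Letting $\Gamma_0 \leq \Gamma$ be a simple finite-index subgroup (then also non-amenable), any amenable normal subgroup $N \trianglelefteq \Gamma$ meets $\Gamma_0$ in a normal subgroup of $\Gamma_0$; this intersection cannot equal $\Gamma_0$ (else $\Gamma_0 \subseteq N$, contradicting non-amenability of $\Gamma_0$), hence is trivial, and so $|N| \leq [\Gamma : \Gamma_0] < \infty$. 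The amenable radical of $\Gamma$ is therefore finite.

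To conclude I would read off quasi-isometry and measure equivalence from Theorem~\ref{thm-intro-coc-and-graph}. The theorem produces a proper cocompact action of $\Gamma$ on $\mathcal{C}_{n,d}$, while $\Lambda$ acts on the same graph properly and cocompactly by left translation (as $\mathcal{C}_{n,d}$ is its Cayley graph by definition); the Milnor--Schwarz lemma then gives that $\Gamma$ and $\Lambda$ are quasi-isometric. For measure equivalence, both actions embed $\Gamma$ and $\Lambda$ as uniform lattices in the locally compact second countable group $\Aut(\mathcal{C}_{n,d})$, and uniform lattices in a common LCSC group are measure equivalent via the topological coupling given by the ambient group itself. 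Since $\Gamma$ has finite and $\Lambda$ has infinite amenable radical, the two non-invariance statements follow.

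The only non-routine input is the existence of a pair $(F,F')$ making $G(F,F')$ virtually simple, which is not re-proved here but supplied by the cited references; with that in hand the proof is a direct assembly of Theorem~\ref{thm-intro-coc-and-graph}, the Milnor--Schwarz lemma and standard facts about amenable radicals and measure equivalence.
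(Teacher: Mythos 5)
Your proposal is correct and follows essentially the same route as the paper: the quasi-isometry comes from both groups acting properly cocompactly on $\mathcal{C}_{n,d}$ (Milnor--Schwarz), the measure equivalence from both sitting as uniform lattices in a common locally compact group (the paper uses $G_{n,d}$ itself with the lattice $\Gamma_{n,d}$, you use $\Aut(\mathcal{C}_{n,d})$ -- an immaterial difference), and the contrast in amenable radicals comes from the virtually simple choices of $G(F,F')$ versus the infinite locally finite normal base of the wreath product.
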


These examples simultaneously show that having an infinite elliptic radical is also not invariant by quasi-isometry. Recall that the elliptic radical of a discrete group is the largest locally finite normal subgroup.

Recall that by a theorem of Eskin--Fisher--Whyte, any finitely generated group $\Gamma$ that is quasi-isometric to a wreath product $C \wr \mathbb{Z}$, where $C$ is a finite group, must act properly and cocompactly on a Diestel-Leader graph $\mathrm{DL}(m,m)$ \cite{EFW-rig-sol,EFW-coarseI,EFW-coarseII}. By the algebraic description of the isometry groups of these graphs given in \cite{hor-prod-tree} (see also \cite{cross-lamp}), this implies in particular that $\Gamma$ has a subgroup of index at most two that is (locally finite)-by-$\mathbb{Z}$. By contrast, Theorem \ref{thm-intro-coc-and-graph} shows that this rigidity fails in the case of $C \wr \mathbb{F}_{k}$ when $k \geq 2$.

The proof of Theorem \ref{thm-intro-coc-and-graph} is given in Section \ref{sec-proof}, which is the core of the article. In Section \ref{sec-comments} we make additional observations regarding other consequences of the phenomena exhibited in Theorem \ref{thm-intro-coc-and-graph} (see notably Propositions \ref{prop-K(F,F')-lattice} and \ref{prop-strong-am-coco}).

\subsection*{Acknowledgements}
I am indebted to Alain Valette for a decisive remark made in Neuch\^{a}tel in May 2015, which eventually led to Theorem \ref{thm-intro-coc-and-graph}. I am also grateful to a referee for pointing out that  Corollary \ref{cor-intro} was also new in the setting of measure equivalence.


\section{The proof}  \label{sec-proof}

\subsection{Groups acting on trees with almost prescribed local action} \label{subsec-thegroups}

Let $\Omega$ be a set of cardinality $d \geq 3$, and let $T_d$ be a $d$-regular tree with vertex set $V_d$ and edge set $E_d$. We fix a coloring $c: E_d \rightarrow \Omega$ such that around every vertex $v$ the map $c$ induces a bijection between edges in the star around $v$ and $\Omega$. For $g \in \aut$ and $v \in V_d$, the action of $g$ on the star around $v$ gives rise to a permutation $\sigma(g,v)$ of $\Omega$, called the local permutation of $g$ at $v$. These local permutations verify the rule \begin{equation} \label{eq:loc-perm} \sigma(gh,v) = \sigma(g,hv) \sigma(h,v) \end{equation} for every $g,h \in \aut$ and $v \in V_d$.

Given a permutation group $F \leq \Sy$, the group $U(F)$ is the group of automorphisms of $T_d$ whose local action is prescribed by $F$, i.e.\ the $g \in \aut$ such that $\sigma(g,v) \in F$ for all $v$ \cite{BM-IHES}. It is a closed cocompact subgroup of $\aut$.

\begin{defi}
Given $F \leq F'\leq \Sy$, we denote by $G(F,F')$ the group of automorphisms $g \in \aut$ such that the local action of $g$ is prescribed to be in $F'$ everywhere, and in $F$ almost everywhere: $\sigma(g,v) \in F'$ for all $v$ and $\sigma(g,v) \in F$ for all but finitely many $v$.
\end{defi}

We also denote by $G(F,F')^\ast$ the subgroup of index two in $G(F,F')$ preserving the bipartition of $T_d$.

Note that requiring that the local permutations belong to $F'$ does not a priori ensure that any element of $F'$ appears as a local permutation of an element of $G(F,F')$. However this is true under the assumption that $F'$ preserves the orbits of $F$ \cite[Lem.\ 3.3]{LB-ae}. In the sequel we will systematically assume that this assumption is fulfilled.

Contrary to the group $U(F)$, the group $G(F,F')$ is no longer closed in $\aut$ when $F \neq F'$. Actually its closure is equal to the group $U(F')$ \cite[Prop.\ 3.5]{LB-ae}. The group $G(F,F')$ is countable if and only if the permutation group $F$ is semi-regular. When this is so, $G(F,F')$ is actually a finitely generated group \cite[Cor.\ 3.8]{LB-ae}. Its action on the tree is not proper, and stabilizers of vertices are infinite locally finite subgroups \cite[\S 3.1]{LB-ae}.

\subsection{Locally compact wreath products} \label{subsub-wr}

Let $X$ be a set, $B$ a group and $A$ a subgroup of $B$. We will denote by $B^{X,A}$ the set of functions $f: X \rightarrow B$ such that $f(x) \in A$ for all but finitely many $x \in X$. Note that $B^{X,A}$ is a group.

\begin{defi}[\cite{Cor-semi-wreath}]
If $H$ is a group acting on $X$, the \textbf{semi-restricted permutational wreath product} $B \wr_X^A H$ is the semi-direct product $B^{X,A} \rtimes H$, where $h \in H$ acts on $f \in B^{X,A}$ by $(hf)(x) = f(h^{-1}x)$.
\end{defi}

The extreme situations when $A=1$ and when $A=B$ correspond respectively to the restricted and the unrestricted wreath product. When $A=1$, we shall write $B \wr_X H$ for the restricted wreath product. Also for simplicity we will sometimes say \enquote{wreath product} $B \wr_X^A H$ instead of \enquote{semi-restricted permutational wreath product}.

When $A$ is a compact group and $H$ a locally compact group acting continuously on $X$, the group $A^{X} \rtimes H$ is a locally compact group for the product topology. If moreover $B$ is locally compact and $A$ is compact open in $B$, there is a natural locally compact group topology on $B \wr_X^A H$, defined by requiring that the inclusion of $A^{X} \rtimes H$ in $B \wr_X^A H$ is continuous and open. See \cite[Sec.\ 2]{Cor-semi-wreath}. 

\medskip

We are interested in the study of certain lattices in some locally compact groups $B \wr_X^A H$. A few remarks are in order:

\begin{lem} \label{lem-red-lat}
Let $A,B,X$ and $H$ as above.
\begin{enumerate}[label=(\alph*)]
\item Assume $\Gamma_1$ is a lattice in $B^{X,A}$ and $\Gamma_2$ is a lattice in $H$ that normalizes $\Gamma_1$. Then $\Gamma_1 \rtimes \Gamma_2$ is a lattice in $B \wr_X^A H$.
\item \label{item-exist-lat-wr} For $B \wr_X^A H$ to contain a lattice, it is necessary that $H$ contains a lattice.
\end{enumerate}
\end{lem}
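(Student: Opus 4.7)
The plan for part (a) is to construct the lattice $\Gamma_1 \rtimes \Gamma_2$ directly and check the two defining properties. Setting $N := B^{X,A}$, the hypothesis that $\Gamma_2$ normalizes $\Gamma_1$ makes $\Gamma_1 \rtimes \Gamma_2$ a well-defined subgroup of $G := N \rtimes H$. Discreteness follows because, near the identity, the topology of $G$ restricts to the product topology on $N \times H$: choosing open neighborhoods $V_N \subset N$ and $V_H \subset H$ of the identity meeting $\Gamma_1$ and $\Gamma_2$ only at $e$, the product $V_N V_H$ meets $\Gamma_1 \rtimes \Gamma_2$ only at $e$ by uniqueness of the semidirect-product decomposition. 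For finite covolume, I would verify that if $F_N, F_H$ are Borel fundamental domains of finite Haar measure for $\Gamma_1 \acts N$ and $\Gamma_2 \acts H$, then $F_N \cdot F_H$ is a fundamental domain for $(\Gamma_1 \rtimes \Gamma_2) \acts G$. The rewriting $\gamma_1 \gamma_2 f_N f_H = \gamma_1 (\gamma_2 f_N \gamma_2^{-1}) \gamma_2 f_H$, combined with the normalization hypothesis, reduces the existence and uniqueness of the decomposition to the two factor actions. Together with the product disintegration of Haar measure on the semidirect product (unimodularity being forced by the existence of lattices), this yields $\mathrm{vol}(G/(\Gamma_1\rtimes\Gamma_2)) = \mathrm{vol}(N/\Gamma_1)\cdot\mathrm{vol}(H/\Gamma_2) < \infty$.

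For part (b), the key observation is that $K := A^X$ is a compact open subgroup of $N = B^{X,A}$ that is invariant under the $H$-action on $N$ (since $H$ only permutes coordinates of functions $X \to B$, while the condition that values lie in $A$ is coordinate-free). Hence $H$ normalizes $K$, and $U := K \rtimes H$ is an open subgroup of $G$. Given any lattice $\Lambda$ in $G$, I would invoke the standard fact that the intersection of a lattice with an open subgroup is itself a lattice: the $U$-orbit of $e\Lambda$ in $G/\Lambda$ is open, hence of finite invariant measure, and is $U$-equivariantly identified with $U/(\Lambda \cap U)$, so $\Lambda \cap U$ is a lattice in $U$. Since $K$ is compact and normal in $U$ with $U/K \cong H$, the image of $\Lambda \cap U$ in $H$ is discrete (the set $(\Lambda \cap U)K$ is closed as the product of a discrete and a compact subset) and has finite covolume (by descending the finite volume through the further quotient), so it is a lattice in $H$.

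The main conceptual step is the identification in (b) of the auxiliary clopen subgroup $U = A^X \rtimes H$, which fits into a short exact sequence $1 \to A^X \to U \to H \to 1$ with compact kernel; this is the structural feature of the semi-restricted wreath product that lets one transport a lattice from $G$ down to $H$. Without the distinguished compact open subgroup $A^X \subseteq B^{X,A}$, the mere projection $G \twoheadrightarrow H$ would only yield a closed subgroup of finite covolume in $H$, not a discrete one. Once $U$ is in place, the rest of both arguments reduces to routine facts about Haar measures, open subgroups, and compact normal quotients.
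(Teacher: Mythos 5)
Your part (b) is exactly the paper's argument: intersect the given lattice with the open subgroup $A^X \rtimes H$ to obtain a lattice there, then use compactness of $A^X$ to see that its projection to $H$ is discrete and of finite covolume. For part (a) the paper gives no argument of its own: it simply cites Raghunathan (Lemmas I.1.6--1.7), i.e.\ the general fact that if $N$ is a closed normal subgroup of $G$, $\Gamma\cap N$ is a lattice in $N$ and the image of $\Gamma$ in $G/N$ is a lattice, then $\Gamma$ is a lattice in $G$, applied to $N=B^{X,A}$ and $\Gamma=\Gamma_1\rtimes\Gamma_2$. Your direct verification (discreteness via the locally product topology, a fundamental domain $F_N F_H$ via the rewriting $\gamma_1\gamma_2 n h=\gamma_1(\gamma_2 n\gamma_2^{-1})(\gamma_2 h)$) is a legitimate proof of that cited fact in this split case. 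The one step to tighten is the measure-theoretic one: writing Haar measure of $G=B^{X,A}\rtimes H$ as $\mu_N\otimes\mu_H$ requires the conjugation action of $H$ on $N$ to preserve $\mu_N$ (equivalently, that $G$ is unimodular, given that $N$ and $H$ are unimodular because they contain lattices); appealing to \enquote{unimodularity forced by the existence of lattices} is circular if the lattice meant is the one in $G$ you are in the process of constructing. The repair is standard and uses your hypotheses: the module $\delta\colon H\to\mathbb{R}_{>0}$ of the action is trivial on $\Gamma_2$, since conjugation by $\Gamma_2$ preserves $\Gamma_1$ and hence its finite nonzero covolume in $N$, and a continuous homomorphism $H\to\mathbb{R}_{>0}$ that is trivial on a lattice of $H$ is trivial (push the finite invariant measure on $H/\Gamma_2$ forward and use that $\mathbb{R}_{>0}$ admits no nonzero finite measure invariant under a nontrivial translation). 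With that in place, $\mu_G(F_N F_H)=\mu_N(F_N)\,\mu_H(F_H)<\infty$ and your argument goes through; alternatively one can simply quote Raghunathan, as the paper does.
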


\begin{proof}
For the first statement, see \cite[Lem.\ I.1.6-7]{Raghu-dis-sb}. For the second statement, observe that if $\Gamma \leq B \wr_X^A H$ is a lattice, the intersection $\Gamma_A = \Gamma \cap (A^{X} \rtimes H)$ is a lattice in $A^{X} \rtimes H$ since $A^{X} \rtimes H$ is open in $B \wr_X^A H$. The subgroup $A^{X}$ being compact, the projection of $\Gamma_A$ to $H$ is discrete, and hence is a lattice in $H$.
\end{proof}

Recall that there are various notions of irreducibility for a lattice in a direct product of groups. In general whether all these notions coincide depends on the context. We refer to \cite[2.B]{CaMo-KM} and \cite[4.A]{CaMo-Iso-disc} for detailed discussions. In the setting of wreath products, we will use the following terminology: 

\begin{defi}
A lattice $\Gamma$ in $B \wr_X^A H$ is an \textbf{irreducible lattice} if $\Gamma$ has a non-discrete projection to the group $H$.
\end{defi}

This definition implies that neither $\Gamma$ nor its finite index subgroups can be of the form $\Gamma_1 \rtimes \Gamma_2$ as in Lemma \ref{lem-red-lat}.

\begin{lem} \label{lem-all-lat-irr}
If the group $B^{X,A}$ does not contain any lattice, then any lattice in $B \wr_X^A H$ is irreducible.
\end{lem}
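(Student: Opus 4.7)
The plan is to proceed by contraposition: starting from a lattice $\Gamma \leq G := B \wr_X^A H$ whose image under the natural projection $p : G \to H$ is discrete in $H$, I would produce a lattice in $B^{X,A}$, contradicting the hypothesis.

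First I would note that $N := B^{X,A}$ is a closed normal subgroup of $G$ that coincides with $\ker p$, and that $G/N \cong H$. Since a discrete subgroup of a Hausdorff topological group is automatically closed, $p(\Gamma)$ is closed in $H$, and therefore $\Gamma N = p^{-1}(p(\Gamma))$ is closed in $G$ by continuity of $p$.

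The decisive ingredient I would then invoke is the classical lattice-intersection principle for locally compact groups (cf.\ Raghunathan, \emph{Discrete subgroups of Lie groups}, Theorem 1.13): if $\Gamma$ is a lattice in a locally compact group $G$, $N$ is a closed normal subgroup, and $\Gamma N$ is closed in $G$, then $\Gamma \cap N$ is a lattice in $N$. Applied here, this immediately gives that $\Gamma \cap B^{X,A}$ is a lattice in $B^{X,A}$, the desired contradiction. The main point to keep an eye on is to use a form of this principle valid without any unimodularity assumption, since the semi-restricted wreath product need not be unimodular; the general statement is obtained by disintegrating a left Haar measure on $G$ along the short exact sequence $1 \to N \to G \to G/N \to 1$ and applying a Fubini-type argument on a Borel fundamental domain for $\Gamma$, and it is exactly this generality that is needed in the present setting.
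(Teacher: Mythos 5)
Your argument is correct, and its skeleton is the same as the paper's: argue by contraposition, assume the projection $\Lambda = p(\Gamma)$ to $H$ is discrete, and extract from $\Gamma$ a lattice in $B^{X,A}$. The difference is only in the final step. You pass to the closed subgroup $\Gamma B^{X,A} = p^{-1}(\Lambda)$ and invoke the classical normal-subgroup principle (Raghunathan-type): $\Gamma$ a lattice, $N$ closed normal, $\Gamma N$ closed, hence $\Gamma \cap N$ is a lattice in $N$. The paper instead notes that $p^{-1}(\Lambda) = B \wr_X^A \Lambda$ contains $\Gamma$ as a lattice and that $B^{X,A}$ is \emph{open} in it (precisely because $\Lambda$ is discrete), so the conclusion follows from the elementary fact that a lattice intersects an open subgroup in a lattice of that subgroup; this bypasses any disintegration of Haar measure. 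Both routes are sound, and yours buys generality at the cost of a heavier citation, while the paper's stays self-contained at the level of open subgroups. One remark: your caveat about unimodularity is not actually an issue --- a locally compact group containing a lattice is automatically unimodular, and a closed normal subgroup of a unimodular group is itself unimodular (Weil's criterion applied to the quotient group), so all invariant measures needed in either argument exist for free once $\Gamma$ is assumed to be a lattice in $B \wr_X^A H$.
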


\begin{proof}
If $\Gamma \leq B \wr_X^A H$ is a lattice with a discrete projection $\Lambda$ to $H$, then the subgroup $B \wr_X^A \Lambda$ contains $\Gamma$ as a lattice, and it follows that $\Gamma$ intersects the subgroup $B^{X,A}$, which is open in $B \wr_X^A \Lambda$, along a lattice of $B^{X,A}$.
\end{proof}

\begin{rmq} \label{rmq-wreath-no-lattice}
Of course if $B$ admits no lattice then the same holds for $B^{X,A}$. More interestingly, there are finite groups $A,B$ for which $B^{X,A}$ fails to admit any lattice (provided $X$ is infinite). This is for instance the case when $A \neq B$ and any non-trivial element of $B$ has a non-trivial power in $A$. Consequently all lattices in $B \wr_X^A H$ are irreducible by Lemma \ref{lem-all-lat-irr} (for arbitrary $H$).
\end{rmq}

\begin{proof}[Proof of Remark \ref{rmq-wreath-no-lattice}]
We claim that the above condition on $A,B$ actually implies that $B^{X,A}$ has no infinite discrete subgroup. For every finite $\Sigma \subset X$, we write $O_\Sigma \leq B^{X,A}$ for the subgroup vanishing on $\Sigma$, and $U_\Sigma = O_\Sigma \cap A^X$. Assume $\Gamma$ is a discrete subgroup of $B^{X,A}$, so that there is a finite $\Sigma \subset X$ such that $\Gamma \cap U_\Sigma = 1$. The assumption on $A,B$ is easily seen to imply that any non-trivial subgroup of $O_\Sigma$ intersects $U_\Sigma$ non-trivially. Therefore $\Gamma \cap O_\Sigma = 1$, and $O_\Sigma$ being of finite index in $B^{X,A}$, $\Gamma$ is finite.
\end{proof}

It should be noted that the existence of an irreducible lattice in $B \wr_X^A H$ forces $H$ to be non-discrete and $B$ to be non-trivial. However this does not force $B^{X,A}$ to be non-discrete, and as we will see below, interesting examples already arise when $B$ is finite and $A$ is trivial.

\subsection{The proof of Theorem \ref{thm-intro-coc-and-graph}} \label{subsubsec-irr-wr}

Let $n \geq 2$ and $d \geq 3$. We denote by $\Sigma_n$ the set of integers $\left\{0,\ldots,n-1\right\}$, and by $\mathfrak{S}_{n}$ the group of permutations of $\Sigma_n$. For $\sigma \in \mathfrak{S}_{n}$ and $i \in \Sigma_n$, we will write $\sigma \cdot i$ the action of $\sigma$ on $i$. The stabilizer of $0 \in \Sigma_n$ in $\mathfrak{S}_{n}$ is obviously isomorphic to $\mathfrak{S}_{n-1}$, and by abuse of notation we will denote it $\mathfrak{S}_{n-1}$. In particular when viewing $\mathfrak{S}_{n-1}$ as a subgroup of $\mathfrak{S}_{n}$, we will always implicitly mean that $\mathfrak{S}_{n-1}$ is the subgroup of $\mathfrak{S}_{n}$ acting only on $\left\{1,\ldots,n-1\right\}$.

\begin{defi}
We will denote by $G_{n,d}$ the wreath product $\mathfrak{S}_{n} \wr_{V_d}^{\mathfrak{S}_{n-1}} \aut$. 
\end{defi}

Groups of the form $G_{n,d}$ were considered in \cite[Ex.\ 2.6]{Cor-semi-wreath}. We will denote $U_{n,d} = \mathfrak{S}_{n}^{V_d, \mathfrak{S}_{n-1}}$, so that $G_{n,d} = U_{n,d} \rtimes \aut$. We endow $G_{n,d}$ with the topology such that sets of the form $((\sigma_v)U_1,\gamma U_2)$ form a basis of neighbourhoods of $((\sigma_v),\gamma)$, where $U_1$ and $U_2$ belong to a basis of the identity respectively in $\mathfrak{S}_{n-1}^{V_d}$ and in $\aut$. This defines a totally disconnected locally compact group topology on $G_{n,d}$ (see \cite[Prop.\ 2.3]{Cor-semi-wreath}). We note that the case $n=2$ is somehow particular, as $U_{2,d}$ is a discrete subgroup of $G_{2,d}$, and $G_{2,d}$ is just the restricted wreath product $G_{2,d} = C_2 \wr_{V_d} \aut = \left( \oplus_{V_d} C_2 \right) \rtimes \aut$.

Our goal in this paragraph is to prove the following, which is slightly more precise than Theorem \ref{thm-intro-coc-and-graph}:

\begin{thm} \label{thm-coc-and-graph}
Let $d \geq 3$, $F \lneq F' \leq \Syd$ permutation groups such that $F$ is semi-regular, and $n$ the index of $F$ in $F'$. Then:
\begin{enumerate}[label=(\alph*)]
\item The group $G(F,F')$ embeds as an irreducible cocompact lattice in the semi-restricted permutational wreath product $G_{n,d}$.
\item When $F$ is regular, the finitely generated group \[ \Gamma_{n,d} = C_n \wr_{V_d} (C_d \ast C_d) = (C_n^2 \, \wr \, \mathbb{F}_{d-1}) \rtimes C_d \] and $G(F,F')^\ast$ have isometric Cayley graphs.
\end{enumerate}
\end{thm}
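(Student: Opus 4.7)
The plan is to build an explicit cocycle-twisted homomorphism $\phi\colon G(F,F') \to G_{n,d}$. First I would identify $\mathfrak{S}_n$ with $\mathrm{Sym}(F'/F)$ based at the coset $F$, so that $\mathfrak{S}_{n-1}$ is the stabilizer of that basepoint and left multiplication gives an embedding $\iota\colon F' \hookrightarrow \mathfrak{S}_n$ sending $F$ into $\mathfrak{S}_{n-1}$. Then, for $g \in G(F,F')$, set $\phi(g) = (f_g, g)$ with $f_g(v) := \iota(\sigma(g, g^{-1}v))$. The cofiniteness condition $\sigma(g, u) \in F$ for a.a.\ $u$ translates immediately into $f_g \in \mathfrak{S}_n^{V_d, \mathfrak{S}_{n-1}}$, so $\phi(g) \in G_{n,d}$. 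The key calculation is the homomorphism property: using the wreath product law $(f, g)(f', g') = (f \cdot (g f'), gg')$ and substituting $w = h^{-1} g^{-1} v$, the identity $f_{gh}(v) = f_g(v) \, f_h(g^{-1} v)$ reduces to the cocycle relation \eqref{eq:loc-perm}. Injectivity is immediate from the second coordinate, and the $g^{-1}$ inside $f_g$ is exactly what is needed to line up with the ``$hv$'' in the cocycle.

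Next I would analyse the action of $\phi(G(F,F'))$ on $\mathcal{C}_{n,d}$ inherited from the proper cocompact $G_{n,d}$-action (Proposition \ref{prop-act-Gnd-X}). Unwinding the definitions, the stabilizer of $(0, o)$ in $\phi(G(F,F'))$ equals $U(F)_o$. Using semi-regularity, a short rigidity argument along edges (the value $\sigma(g, v)$ of $g \in U(F)_o$ is pinned down by $\sigma(g, o)$ through the color of the edge back to $o$, and semi-regularity then forces $\sigma(g, \cdot)$ to be constant) yields $U(F)_o \cong F$, a finite group, hence $\phi(G(F,F'))$ is discrete. For cocompactness I would show finitely many orbits on $V(\mathcal{C}_{n,d})$: since $F'$ preserves $F$-orbits on $\Omega$, $G(F,F')$ has finitely many orbits on $V_d$; fixing a representative $v$, every finite-support configuration $\varphi$ can be sent to $0$ by some element of $\phi(G(F,F'))_v$. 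After the change of variables $u = g^{-1} w$, this reduces to producing $g \in G(F,F')$ fixing $v$ with $\sigma(g, u) \in F\alpha_{\varphi(u)}^{-1}$ at each $u \in \mathrm{supp}(\varphi)$ and $\sigma(g, u) \in F$ elsewhere. Such a $g$ can be constructed inductively from the basepoint: at each step the cocycle-consistency constraint, combined with semi-regularity and the orbit hypothesis on $F'$, cuts out exactly one admissible element from the prescribed right coset of $F$ in $F'$. Irreducibility is then automatic, because the projection $\phi(G(F,F')) \to \Aut(T_d)$ is $G(F,F')$, whose closure $U(F')$ is non-discrete.

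For part (b), when $F$ is regular, the stabilizer $U(F)_o \cong F \cong C_d$ matches the $C_d$-factor of $\Gamma_{n,d} = (C_n^2 \wr \mathbb{F}_{d-1}) \rtimes C_d$, with $\mathbb{F}_{d-1}$ appearing as the kernel of $C_d \ast C_d \to C_d$ and acting freely and transitively on each bipartition class of $V_d$. My plan is to upgrade the proper cocompact action of (a) into a free, vertex-transitive action of both $G(F,F')^\ast$ and $\Gamma_{n,d}$ on a common graph obtained from $\mathcal{C}_{n,d}$ by enriching each vertex with an oriented edge at its position. Since $U(F)_o \cong F$ acts regularly on the oriented edges at $o$ (again by semi-regularity), this exactly kills the residual stabilizer and yields a free bipartition-preserving action; matching generating sets through the bipartition decomposition and the $C_n$-labels identifies the two Cayley graphs.

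The main obstacle, and the technical heart of the argument, is the cocompactness step in (a): producing elements of $G(F,F')$ that fix a chosen vertex and realise an arbitrary prescribed finite pattern of local actions subject to the cocycle-consistency constraint. The twist formula defining $\phi$ is conceptually critical — without the $g^{-1}v$ inside $f_g$ the image is not discrete — but is essentially forced; once it is in place the verification runs along the lines sketched above.
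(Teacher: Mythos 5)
Your part (a) is essentially the paper's argument: the twisted homomorphism $\gamma \mapsto (\rho_\gamma,\gamma)$ with $\rho_{\gamma,v}=\alpha(\sigma(\gamma,\gamma^{-1}v))$ is exactly the embedding used there, and your configuration-killing step (choose at each vertex an element of the prescribed right coset of $F$ in $F'$ compatible with the colour constraint along the edge towards the basepoint, using that $F'$ preserves the $F$-orbits and that $F$ is semi-regular) is the same mechanism as the paper's induction on the size of the support, just organized as one outward construction instead of a product of half-tree supported elements; your identification of the vertex stabilizer with $U(F)_o\cong F$ and the irreducibility remark are fine. The only difference is that you work with the lamplighter graph $\mathcal{C}_{n,d}$ while the paper works with the edge-based graph $X_{n,d}$, and this difference is harmless for (a).

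For part (b), however, there is a genuine gap. Your proposed common graph has vertex set (configuration, vertex of $T_d$, oriented edge at that vertex), i.e.\ configurations times darts of $T_d$. Both $G(F,F')^\ast$ and $\Gamma_{n,d}$ preserve the bipartition of $V_d$, hence preserve the class of the base vertex of a dart; so each group has (at least) two orbits of vertices on this enriched graph, and the claimed free \emph{vertex-transitive} actions do not exist. A free action with two orbits does not exhibit the graph as a Cayley graph of either group, so "matching generating sets" cannot be carried out as stated. Note also that the tree factor $C_d\ast C_d$ of $\Gamma_{n,d}$ acts on darts of its Bass--Serre tree with two orbits for the same reason. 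The repair is precisely the paper's choice: take as second coordinate an \emph{unoriented} edge, i.e.\ use the graph $X_{n,d}$ with vertex set $\Sigma_n^{(V_d)}\times E_d$. Then $C_d\ast C_d$ acts simply transitively on $E_d$, so $\Gamma_{n,d}$ acts simply transitively on the vertices of $X_{n,d}$; and when $F$ is regular, $U(F)^\ast$ is transitive on $E_d$ (Burger--Mozes) and acts freely on edges by semi-regularity, so after the configuration-killing step $G(F,F')^\ast$ also acts simply transitively on the vertices of $X_{n,d}$, giving the isometry of Cayley graphs. As written, your dart-based construction does not yield this conclusion.
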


Recall that a permutation group is \textbf{regular} if it is transitive and semi-regular.

We denote by $\Sigma_n^{(V_d)}$ the set of functions $f: V_d \rightarrow \Sigma_n$ with finite support, where the support of $f$ is the set of $v$ such that $f(v) \neq 0$. We will also write $f_v$ for the image of $v$ by $f$, and sometimes use the notation $(f_v)$ for the function $f$.

We consider the graph $X_{n,d}$ whose set of vertices is the set of pairs $(f,e)$, where $f$ belongs to $\Sigma_n^{(V_d)}$ and $e \in E_d$, and edges emanating from a vertex $(f,e)$ are of two types:
\begin{itemize}
	\item type 1: $(f,e')$ is connected to $(f,e)$ if $e' \in E_d$ is a neighbour of $e$ (i.e.\ if $e$ and $e'$ share exactly one vertex);
	\item type 2: $(f',e)$ is connected to $(f,e)$ if the function $f'$ is obtained from $f$ by changing the value at exactly one vertex of $e$.
\end{itemize}

Note that since any $e \in E_d$ has $2(d-1)$ neighbours and $\Sigma_n$ has cardinality $n$, every vertex of $X_{n,d}$ has $2(d-1)$ neighbours of type 1 and $2(n-1)$ neighbours of type 2. The graph $X_{n,d}$ is tightly related to the wreath product of the complete graph on $n$ vertices with the tree $T_d$, see \S \ref{subsec-variations}.

\begin{prop} \label{prop-act-Gnd-X}
The group $G_{n,d}$ acts by automorphisms on the graph $X_{n,d}$ by preserving the types of edges. Moreover the action is faithful, continuous, proper and transitive on the set of vertices. 
\end{prop}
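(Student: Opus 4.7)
The plan is to unpack the action explicitly and check the six properties in sequence. An element $(\sigma, g) \in U_{n,d} \rtimes \aut$ should act on a vertex $(f,e)$ by
\[
(\sigma, g) \cdot (f, e) = \bigl(\sigma \cdot (g \cdot f),\, g \cdot e\bigr),
\]
where $\aut$ acts on functions by $(g \cdot f)(v) = f(g^{-1}v)$ and pointwise composition is $(\sigma \cdot f')(v) = \sigma_v \cdot f'(v)$. First I would check that this is well defined on $\Sigma_n^{(V_d)}$: the $\aut$-action preserves finite support, and for $\sigma \in U_{n,d}$ one has $\sigma_v \in \mathfrak{S}_{n-1}$, hence $\sigma_v \cdot 0 = 0$, outside a finite set, so $\sigma \cdot f'$ has finite support whenever $f'$ does. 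Associativity reduces to the pointwise identity $(g \cdot \sigma) \cdot (g \cdot f') = g \cdot (\sigma \cdot f')$, which matches the convention $(g \cdot \sigma)_v = \sigma_{g^{-1}v}$ for the $\aut$-action on $U_{n,d}$.

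Next I would verify preservation of the two edge types. For type~1, the second coordinate transforms via $\aut$, which preserves neighbouring edges, while the first coordinate does not depend on $e$; so a type~1 edge is sent to a type~1 edge. For type~2, the key observation is that if $f_1$ and $f_2$ differ at exactly one vertex $v$ of $e$, then $g \cdot f_1$ and $g \cdot f_2$ differ at exactly the vertex $g \cdot v$ of $g \cdot e$, and applying $\sigma$ pointwise (a bijection at each coordinate) preserves the locus where two functions disagree, so the image is again a type~2 edge. For vertex-transitivity, given $(f, e)$ and $(f', e')$, I would first choose $g \in \aut$ with $g \cdot e = e'$ (edge-transitivity of $\aut$), then define $\sigma_v$ to be any permutation of $\Sigma_n$ sending $(g \cdot f)(v)$ to $f'(v)$ for $v$ in the finite set $\mathrm{supp}(g \cdot f) \cup \mathrm{supp}(f')$, and $\sigma_v = 1$ elsewhere; this yields $\sigma \in U_{n,d}$ with $(\sigma, g) \cdot (f, e) = (f', e')$.

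The final step is to analyse the stabilizer of the base vertex $x_0 = (0, e_0)$. A pair $(\sigma, g)$ fixes $x_0$ iff $g \cdot e_0 = e_0$ and $\sigma_v \cdot 0 = 0$ for every $v$, i.e.\ iff $(\sigma, g) \in \mathfrak{S}_{n-1}^{V_d} \rtimes \mathrm{Stab}_{\aut}(e_0)$. By the description of the topology on $G_{n,d}$ recalled just before the statement, this subgroup is open, and it is compact as the product of the profinite factor $\mathfrak{S}_{n-1}^{V_d}$ with the compact edge stabilizer in $\aut$; hence the action is continuous and proper, and by vertex-transitivity the same holds at every vertex. Faithfulness: if $(\sigma, g)$ acts trivially then it fixes every $(0, e)$, forcing $g$ to fix every $e \in E_d$ setwise, which forces $g = 1$ since no non-trivial tree automorphism can fix every edge setwise (swapping the endpoints of one edge is incompatible with preserving an adjacent edge). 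One then recovers $\sigma = 1$ by evaluating at the indicator functions $\delta_{w, k}$ for $w \in V_d$ and $k \in \Sigma_n$. No substantial obstacle arises here; the only real care goes into keeping the semi-direct product conventions consistent across the three actions (of $\aut$ on $V_d$, of $\aut$ on $U_{n,d}$, and of $U_{n,d}$ on $\Sigma_n^{(V_d)}$).
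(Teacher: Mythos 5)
Your proposal is correct and follows essentially the same route as the paper: define the diagonal action on $\Sigma_n^{(V_d)} \times E_d$, check that it preserves the two edge types, identify the stabilizer of the base vertex $((0),e)$ as the compact open subgroup $\mathfrak{S}_{n-1}^{V_d} \rtimes \mathrm{Stab}_{\aut}(e)$, and deduce continuity and properness from that, with transitivity by an explicit correction on a finite support. The only difference is that you spell out the transitivity and faithfulness checks that the paper leaves as easy verifications, and your arguments for these are sound.
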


\begin{proof}
The group $G_{n,d}$ is a subgroup of the unrestricted permutational wreath product of $\mathfrak{S}_{n}$ and $\aut$. The latter group has a faithful action on the set of functions $V_d \rightarrow \Sigma_n$, given by \[ ((\sigma_v),\gamma) \cdot (f_v) = (\sigma_v \cdot f_{\gamma^{-1}v}). \] The group $G_{n,d}$ preserves $\Sigma_n^{(V_d)}$ because $\sigma_v$ fixes $0$ almost surely if $(\sigma_v)$ belongs to $U_{n,d}$. Now the projection from $G_{n,d}$ onto $\aut$ induces an action of $G_{n,d}$ on the set $E_d$, and we will consider the diagonal action of $G_{n,d}$ on $\Sigma_n^{(V_d)} \times E_d$. In other words, if $g = ((\sigma_v),\gamma) \in G_{n,d}$, and $((f_v), e)\in X_{n,d}$,  \begin{equation} \label{eq:action} g \cdot \left( (f_v), e \right) = \left((\sigma_v \cdot f_{\gamma^{-1}v}),\gamma e \right). \end{equation}

Fix $x = ((f_v), e)\in X_{n,d}$ and $g = ((\sigma_v),\gamma) \in G_{n,d}$, and let $x'$ be a neighbour of $x$. If $x'$ is of type 1, then we have $x' = ((f_v), e')$, where $e$ and $e'$ share a vertex $w$ in $T_d$. Then $\gamma e$ and $\gamma e'$ have the vertex $\gamma w$ in common, so that by the formula (\ref{eq:action}), $g\cdot x'$ is a neighbour of type 1 of $g\cdot x$ in $X_{n,d}$. Now if $x'$ is of type 2, then we may write $x' = ((f_v'), e)$ with $f_v' = f_v$ if and only if $v \neq w$, where $w$ is one of the two vertices of $e$. It follows that $\sigma_v \cdot f_{\gamma^{-1}v} = \sigma_v \cdot f_{\gamma^{-1}v}'$ if and only if $v \neq \gamma w$, so that by (\ref{eq:action}) $g\cdot x'$ is a neighbour of type 2 of $g\cdot x$. This shows that the action is by graph automorphisms and preserves the types of edges.

\begin{lem} \label{lem-stab-vert-X}
Let $e \in E_d$, and let $K$ be the stabilizer of $e$ in $\aut$. Then the stabilizer of the vertex $((0),e)$ in $G_{n,d}$ is the compact open subgroup $\prod \mathfrak{S}_{n-1} \rtimes K$.
\end{lem}

\begin{proof}
That $g = ((\sigma_v),\gamma)$ fixes $((0),e)$ exactly means by (\ref{eq:action}) that $\sigma_v$ fixes $0$ for all $v$ and that $\gamma$ fixes $e$.
\end{proof}

So the fact that the action is continuous and proper follows from the lemma, and the transitivity on the set of vertices is an easy verification.
\end{proof}

Consider now the free product $C_d \ast C_d$ of two cyclic groups of order $d$, acting on its Bass-Serre tree $T_d$ with one orbit of edges and two orbits of vertices. Denote by $C_n$ the cyclic subgroup of $\mathfrak{S}_{n}$ generated by the cycle $(0,\ldots,n-1)$, and set \[ \Gamma_{n,d}:= C_n \, \wr_{V_d} \, (C_d \ast C_d) \leq G_{n,d}. \] Remark that $C_d \ast C_d$ has a split morphism onto $C_d$, whose kernel acts on $T_d$ with two orbits of vertices and is free of rank $d-1$. Therefore $\Gamma_{n,d}$ splits as $\Gamma_{n,d} = (C_n^2 \, \wr \, \mathbb{F}_{d-1}) \rtimes C_d$.

\begin{lem} \label{lem-stand-act}
$\Gamma_{n,d} \leq G_{n,d}$ acts freely transitively on the vertices of $X_{n,d}$.
\end{lem}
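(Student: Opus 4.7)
The plan is to verify transitivity and freeness directly from the action formula $(\ref{eq:action})$, by choosing a convenient basepoint. Fix any edge $e_0 \in E_d$ and set $x_0 = ((0), e_0) \in X_{n,d}$, where $(0)$ denotes the zero function on $V_d$. The argument rests on two structural observations: first, the free product $C_d \ast C_d$ acts on its Bass-Serre tree $T_d$ with a single orbit of edges and with trivial edge stabilizers, since edge stabilizers in $A \ast B$ are of the form $A \cap B = 1$; second, the cyclic group $C_n = \langle (0,1,\ldots,n-1) \rangle$ acts regularly on $\Sigma_n$.

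For transitivity, given any $(f, e) \in X_{n,d}$, I would pick $\gamma \in C_d \ast C_d$ with $\gamma e_0 = e$ using the first observation, and then, using the second, let $\sigma_v \in C_n$ be the unique element satisfying $\sigma_v \cdot 0 = f_v$ for each $v \in V_d$. Since $f$ has finite support in $\Sigma_n^{(V_d)}$ and $C_n$ acts freely on $\Sigma_n$, one has $\sigma_v = 1$ for all but finitely many $v$, so $g := ((\sigma_v), \gamma)$ really lies in the restricted wreath product $\Gamma_{n,d}$. A direct application of $(\ref{eq:action})$ then gives $g \cdot x_0 = (f, e)$.

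For freeness, suppose $g = ((\sigma_v), \gamma)$ fixes $x_0$. Formula $(\ref{eq:action})$ forces $\gamma e_0 = e_0$ and $\sigma_v \cdot 0 = 0$ for every $v \in V_d$. The trivial edge stabilizers of the $C_d \ast C_d$-action on $T_d$ yield $\gamma = 1$, and the freeness of the $C_n$-action on $\Sigma_n$ yields $\sigma_v = 1$ for every $v$. Hence $g = 1$.

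There is no real obstacle in this argument: it is essentially a bookkeeping exercise combining the regularity of $C_n$ on $\Sigma_n$ with the standard fact that $C_d \ast C_d$ acts freely and transitively on the edges of its Bass-Serre tree. The only point requiring a small verification is that the tuple $(\sigma_v)$ constructed in the transitivity step has finite support, which follows immediately from the finite support of $f$.
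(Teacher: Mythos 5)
Your proof is correct and follows essentially the same route as the paper: both reduce the claim to the simple transitivity of $C_n$ on $\Sigma_n$ and of $C_d \ast C_d$ on the edges of its Bass--Serre tree, read off from the action formula at the basepoint $((0),e_0)$. You merely spell out the bookkeeping (finite support of $(\sigma_v)$, conjugacy of stabilizers) that the paper leaves implicit.
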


\begin{proof}
This is clear: the image of the vertex $((0),e)$ by an element $((\sigma_v),\gamma)$ is  $((\sigma_v \cdot 0), \gamma e)$, so both transitivity and freeness follow from the fact that the actions of $C_n$ on $\Sigma_n$ and of $C_d \ast C_d$ on $E_d$ have these properties.
\end{proof}

We now explain how the groups $G(F,F')$ act on the graphs $X_{n,d}$. In the sequel $F,F'$ denote two permutation groups on a set $\Omega$ of cardinality $d$ such that $F \leq F'$ and $F'$ preserves the orbits of $F$, and we denote by $n$ the index of $F$ in $F'$.

Fix a bijection between $\Sigma_n = \left\{0,\ldots,n-1\right\}$ and $F' / F$, such that $0$ is sent to the class $F$. The action of $F'$ on the coset space $F' / F$ induces a group homomorphism \[ \alpha: F' \rightarrow \mathfrak{S}_{n}, \] such that $\alpha(F)$ lies inside $\mathfrak{S}_{n-1}$. For $\gamma \in G(F,F')$ and $v \in V_d$, write \[ \rho_{\gamma,v} = \alpha (\sigma(\gamma,\gamma^{-1}v)) \in \mathfrak{S}_{n}. \] Note that $\rho_{\gamma,v} \in \mathfrak{S}_{n-1}$ if and only if $\sigma(\gamma,\gamma^{-1}v) \in F$. We also denote by $\rho_{\gamma} = (\rho_{\gamma,v})$.

Recall that in general the group $G(F,F')$ admits a locally compact group topology, that is defined by requiring that the inclusion of $U(F)$ is continuous and open \cite[\S 3.1]{LB-ae}. This topology is the discrete topology if and only if the permutation group $F$ is semi-regular.


\begin{prop} \label{prop-embed-g(f,f')-wr}
Let $F \leq F' \leq \Sy$, and $n$ the index of $F$ in $F'$. The map $\varphi: G(F,F') \rightarrow G_{n,d}$, $\gamma \mapsto \varphi(\gamma) = \left( \rho_{\gamma}, \gamma \right)$, is a well-defined group morphism that is injective, continuous, and with a closed and cocompact image.
\end{prop}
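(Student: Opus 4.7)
My plan is to verify the five asserted properties in turn: well-defined, homomorphism, injective, continuous, and with closed cocompact image. The first three are formal consequences of the cocycle identity~(\ref{eq:loc-perm}) together with the fact that, under the identification of $\Sigma_n$ with $F'/F$ sending $0$ to the class of $F$, the stabilizer of $0$ in $F'$ is exactly $F$, so $\alpha(F) \subseteq \mathfrak{S}_{n-1}$. To see that $\rho_\gamma \in U_{n,d}$, I apply this at $w = \gamma^{-1}v$: since $\sigma(\gamma,w) \in F$ for all but finitely many $w$, one gets $\rho_{\gamma,v} = \alpha(\sigma(\gamma,\gamma^{-1}v)) \in \mathfrak{S}_{n-1}$ for all but finitely many $v$. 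For the homomorphism property, I feed the cocycle relation into the definition of $\rho_{\gamma\eta,v}$ at $x = (\gamma\eta)^{-1}v$, obtaining $\rho_{\gamma\eta,v} = \rho_{\gamma,v}\,\rho_{\eta,\gamma^{-1}v}$, which matches the lamp coordinate of the product in $G_{n,d} = U_{n,d} \rtimes \aut$. Injectivity is immediate from the second coordinate.

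For continuity at the identity, a basic neighbourhood of $1$ in $G_{n,d}$ has the form $V \times W$, where $V \subset \mathfrak{S}_{n-1}^{V_d}$ consists of $\rho$ with $\rho_v = 1$ for $v$ in some finite $\Sigma \subset V_d$, and $W \subset \aut$ fixes a large finite ball. If $\gamma$ belongs to the open subgroup of $U(F)$ fixing $\Sigma$ together with that ball, then $\gamma^{-1}v = v$ for $v \in \Sigma$ and $\sigma(\gamma,v) \in F$ for all $v$, so $\rho_{\gamma,v} = 1$ on $\Sigma$; since $U(F)$ is open in $G(F,F')$, $\varphi$ is continuous. For closedness of the image, I take $\varphi(\gamma_n) = (\rho^{(n)},\gamma_n) \to (\rho,g)$ in $G_{n,d}$, so $\gamma_n \to g$ in $\aut$. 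Since the closure of $G(F,F')$ in $\aut$ is $U(F')$, we have $g \in U(F')$; in particular $\sigma(g,w) \in F'$ for all $w$. For each $v$, once $\gamma_n$ agrees with $g$ on a large enough neighbourhood of $g^{-1}v$ one has $\sigma(\gamma_n,\gamma_n^{-1}v) = \sigma(g,g^{-1}v)$, so convergence in the lamp coordinate forces $\rho_v = \alpha(\sigma(g,g^{-1}v))$. Because $\rho \in U_{n,d}$, this gives $\sigma(g,g^{-1}v) \in F$ for all but finitely many $v$, equivalently $g \in G(F,F')$ and $(\rho,g) = \varphi(g)$.

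Cocompactness of the image is the main geometric content and the step I expect to be hardest. I would deduce it from the fact that $\varphi(G(F,F'))$ has finitely many orbits on the vertex set of $X_{n,d}$: by Proposition~\ref{prop-act-Gnd-X} and Lemma~\ref{lem-stab-vert-X}, $G_{n,d}$ acts properly and transitively on $X_{n,d}$ with compact open vertex stabilizers, so finitely many orbits for the image of $\varphi$ yield $G_{n,d} = \varphi(G(F,F')) \cdot K$ for a compact set $K$. Now $G(F,F')$ has finitely many orbits on $E_d$ since it contains $U(F)$, whose edge orbits correspond to the $F$-orbits on $\Omega$. Fixing an edge representative $e$, it remains to check that $\mathrm{Stab}_{G(F,F')}(e)$ has finitely many orbits on pairs $(f,e)$ with $f \in \Sigma_n^{(V_d)}$, under the twisted action $f \mapsto (v \mapsto \rho_{\gamma,v}\, f_{\gamma^{-1}v})$. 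The key absorption claim is that every such orbit contains $(0,e)$: the change of variables $w = \gamma^{-1}v$ reduces this to finding $\gamma \in G(F,F')$ fixing $e$ with $\sigma(\gamma,w) \in F$ off the (finite) support of $f$ and, at each $w$ in that support, with $\sigma(\gamma,w)$ in a coset of $F$ in $F'$ prescribed by $f_w$. The main obstacle is this prescription statement; I would establish it from the hypothesis that $F'$ preserves the $F$-orbits (via \cite[Lem.\ 3.3]{LB-ae}) together with the fact that $G(F,F')$ is stable under finitely supported modifications of local permutations within $F'$, which together permit the required $\gamma$ to be constructed explicitly.
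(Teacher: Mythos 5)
Your treatment of well-definedness, the homomorphism property and injectivity is correct and coincides with the paper's; your closedness argument (a net argument using that the closure of $G(F,F')$ in $\aut$ is $U(F')$ and that membership of $\rho$ in $U_{n,d}$ forces $\sigma(g,g^{-1}v)\in F$ almost everywhere) is also correct, and only superficially different from the paper's, which instead notes that the intersection of the image with the open subgroup $\mathfrak{S}_{n-1}^{V_d}\rtimes\aut$ is $\varphi(U(F))$ and is closed. The genuine gap is in the cocompactness step, which you yourself identify as the main content and then do not prove. Your reduction (finitely many $G(F,F')$-orbits of edges, then transitivity of the edge stabilizer on the lamp configurations, i.e.\ finding $\gamma$ fixing $e$ with $\sigma(\gamma,w)\in F$ off the support of $f$ and in a prescribed coset of $F$ at each support vertex) is fine, but the ingredient you invoke to produce such a $\gamma$ --- that ``$G(F,F')$ is stable under finitely supported modifications of local permutations within $F'$'' --- is not a fact. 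Local permutations of a tree automorphism satisfy the consistency constraint $\sigma(g,v)(c)=\sigma(g,w)(c)$ whenever $v,w$ are joined by an edge of color $c$, so one cannot alter the values at finitely many vertices arbitrarily: a change at a vertex $v_0$ must fix the color of the edge pointing towards the part of the tree one wants to leave untouched, and must be compensated on entire half-trees (within $F$, which is possible precisely because $F'$ preserves the $F$-orbits). Making this precise is exactly the paper's Proposition \ref{prop-g(f,f')-trans-som}: by induction on the size of the support, one takes $v_0$ in the support farthest from $e$, chooses $\sigma\in F_a'$ (stabilizer of the color $a$ of the edge from $v_0$ toward $e$) with $\alpha(\sigma)\cdot f_{v_0}=0$ --- existence uses $F'=FF_a'$ --- and for each $b\neq a$ chooses $\sigma_b\in F$ with $\sigma_b(b)=\sigma(b)$, defining $h$ which is trivial on the half-tree containing $e$, equals $\sigma$ at $v_0$ and $\sigma_b$ on the half-trees $T^{1,b}$; since $\alpha(F)$ fixes $0$, no new support is created and the induction closes. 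None of this construction appears in your proposal, and your one-shot ``prescription statement'' (simultaneous coset prescriptions at all support vertices, values in $F$ elsewhere) would in addition have to handle the consistency constraint between adjacent or nested support vertices, an issue the inductive scheme avoids. So the heart of the proof is missing, not merely routine.

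A smaller point: in the continuity argument, the inference ``$\gamma^{-1}v=v$ for $v\in\Sigma$ and $\sigma(\gamma,v)\in F$, so $\rho_{\gamma,v}=1$ on $\Sigma$'' is not valid, since $\alpha(F)\leq\mathfrak{S}_{n-1}$ is in general nontrivial. You need the open subgroup to fix pointwise a ball containing the stars of the vertices of $\Sigma$, which forces $\sigma(\gamma,v)=1$ for $v\in\Sigma$ and hence $\rho_{\gamma,v}=1$; with this correction the argument is fine and in the spirit of the paper's (which simply observes that $\varphi^{-1}\bigl(\mathfrak{S}_{n-1}^{V_d}\rtimes\aut\bigr)=U(F)$ is open in $G(F,F')$).
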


\begin{proof}
The map $\varphi$ is well-defined because $\rho_{\gamma,v} \in \mathfrak{S}_{n-1}$ for all but finitely many $v$, so that we indeed have $\rho_{\gamma} \in \mathfrak{S}_{n}^{V_d, \mathfrak{S}_{n-1}}$. The fact that $\varphi$ is a group morphism follows from the cocycle identity (\ref{eq:loc-perm}) satisfied by local permutations. Indeed for $\gamma, \gamma' \in G(F,F')$ we have $\varphi(\gamma) \varphi(\gamma') = ( \psi,\gamma \gamma')$ with 
\[ \begin{aligned} \psi_v & = \rho_{\gamma,v} \rho_{\gamma',\gamma^{-1}v} \\ & = \alpha (\sigma(\gamma,\gamma^{-1}v)) \alpha (\sigma(\gamma',\gamma'^{-1}\gamma^{-1}v)) \\ & = \alpha (\sigma(\gamma \gamma',(\gamma \gamma')^{-1}v)) \\ & = \rho_{\gamma \gamma',v}, \end{aligned} \]
so $\psi = \rho_{\gamma \gamma'}$ and $\varphi(\gamma) \varphi(\gamma') = \varphi(\gamma \gamma')$.

Injectivity of $\varphi$ is clear since the composition with the projection to $\aut$ is injective. The preimage in $G(F,F')$ of the open subgroup $\mathfrak{S}_{n-1}^{V_d} \rtimes \aut$ is the subgroup $U(F)$, which is open in $G(F,F')$ by definition of the topology, so it follows that the map $\varphi$ is continuous. Also the intersection between $\mathrm{Im}(\varphi)$ and the open subgroup $\mathfrak{S}_{n-1}^{V_d} \rtimes \aut$ is $\varphi(U(F))$, and it is easy to check that the latter is indeed a closed subgroup of $G_{n,d}$, so it follows that $\mathrm{Im}(\varphi)$ is closed in $G_{n,d}$. The fact that $\mathrm{Im}(\varphi)$ is cocompact will follow from Proposition \ref{prop-act-Gnd-X} and Proposition \ref{prop-g(f,f')-trans-som} below.
\end{proof}

In the sequel for simplicity we will also write $G(F,F')$ for the image of \[ \varphi: G(F,F') \rightarrow G_{n,d}, \, \, \gamma \mapsto \varphi(\gamma) = \left( \rho_{\gamma}, \gamma \right). \]

In particular when speaking about an action of $G(F,F')$ on the graph $X_{n,d}$, we will always refer to the action defined in Proposition \ref{prop-act-Gnd-X}, restricted to $G(F,F')$. This means that $\gamma \in G(F,F')$ acts on $(f,e) \in X_{n,d}$ by \[ \gamma \cdot (f,e) = (f^{\gamma},\gamma e),\] where \begin{equation} \label{eq:action-g(f,f')} (f^{\gamma})_v = \rho_{\gamma,v} \cdot f_{\gamma^{-1}v} = \alpha (\sigma(\gamma,\gamma^{-1}v)) \cdot f_{\gamma^{-1}v}. \end{equation}

This action should not be confused with the standard action $((f_v),e) \mapsto ((f_{\gamma^{-1}v}),\gamma e)$ coming from the inclusion of $G(F,F')$ in $\aut$.



\begin{prop} \label{prop-g(f,f')-trans-som}
Let $F \leq F' \leq \Sy$, and $n$ the index of $F$ in $F'$.
\begin{enumerate}[label=(\alph*)]
	\item \label{item-g(f,f')-trans-X} The group $G(F,F')^\ast$ acts cocompactly on $X_{n,d}$. When $F$ is transitive on $\Omega$, the group $G(F,F')^\ast$ acts transitively on vertices of $X_{n,d}$.

	\item \label{item-g(f,f')-stab-X} The stabilizer of a vertex $\left( (0), e \right) \in X_{n,d}$ in $G(F,F')$ is the stabilizer of $e$ in $U(F)$. In particular the action of $G(F,F')$ on $X_{n,d}$ is proper.
	\end{enumerate}
	Therefore when $F$ is regular, the group $G(F,F')^\ast$ acts freely transitively on the vertices of $X_{n,d}$.
\end{prop}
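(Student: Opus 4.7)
The plan is to prove (b) first by a direct application of the twisted action formula (\ref{eq:action-g(f,f')}), and then deduce (a) by a \emph{lamp-clearing} strategy: every $(f,e)\in X_{n,d}$ can be moved into the orbit of some $((0),e')$ by successively cancelling the non-zero values of $f$ one vertex at a time, which reduces transitivity and cocompactness on $V(X_{n,d})$ to the corresponding properties of the $G(F,F')^\ast$-action on $E_d$.

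For (b), if $\gamma\in G(F,F')$ fixes $((0),e)$, then $\gamma e=e$ and (\ref{eq:action-g(f,f')}) gives $\alpha(\sigma(\gamma,\gamma^{-1}v))\cdot 0=0$ for every $v\in V_d$. Under the identification $\Sigma_n=F'/F$ with $0$ corresponding to the coset $F$, this is equivalent to $\sigma(\gamma,w)\in F$ for every $w$, i.e.\ $\gamma\in U(F)$. The stabilizer is therefore $U(F)\cap\mathrm{Stab}_{\aut}(e)$, which is compact open in $\aut$, and properness of the action on $X_{n,d}$ follows.

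For (a), I would argue by induction on $|\mathrm{supp}(f)|$. Given $v\in\mathrm{supp}(f)$ with $f_v=a\neq 0$, the standing assumption that $F'$ preserves the $F$-orbits guarantees that $\alpha(F')$ acts transitively on $\Sigma_n=F'/F$, so there exists $s\in F'$ with $\alpha(s)\cdot a=0$. I take $\gamma$ to be the unique tree automorphism fixing $v$ with $\sigma(\gamma,v)=s$ and $\sigma(\gamma,w)=\mathrm{id}$ for all $w\neq v$. Then $\gamma\in G(F,F')$, and as it fixes a vertex it preserves the bipartition, so $\gamma\in G(F,F')^\ast$. Applying (\ref{eq:action-g(f,f')}) and using $\gamma^{-1}v=v$, one checks that $(f^\gamma)_v=\alpha(s)\cdot a=0$ while $(f^\gamma)_u=f_{\gamma^{-1}u}$ for $u\neq v$, whence $|\mathrm{supp}(f^\gamma)|=|\mathrm{supp}(f)|-1$. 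Iterating reduces $(f,e)$ to a vertex of the form $((0),e')$. Since $U(F)^\ast\subset G(F,F')^\ast$ is cocompact in $\aut$, it has finitely many orbits on $E_d$, giving cocompactness in general; and when $F$ is transitive on $\Omega$, $U(F)^\ast$ is already transitive on $E_d$, yielding the claimed vertex-transitivity on $X_{n,d}$.

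For the final freeness assertion when $F$ is regular, the stabilizer of $((0),e)$ in $G(F,F')^\ast$ is, by (b), the subgroup of $U(F)$ fixing $e$ and preserving the bipartition; as the two endpoints of $e$ lie in distinct parts, they must each be fixed, and semi-regularity of $F$ forces the only element of $U(F)$ fixing two adjacent vertices to be the identity. The main obstacle I expect is not conceptual but bookkeeping: verifying that the locally supported $\gamma$ in (a) genuinely lies in $G(F,F')^\ast$, and that the twisted action formula is applied with the correct composition, in particular that $\gamma^{-1}u\neq v$ whenever $u\neq v$ since $\gamma$ fixes $v$.
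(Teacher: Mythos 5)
Your part (b) and the final freeness argument are correct and essentially the paper's (the paper gets (b) by intersecting with the stabilizer computed in Lemma \ref{lem-stab-vert-X}, which is the same computation). The genuine gap is in (a): the clearing element you use does not exist. For any $\gamma\in\aut$ and any edge $\{v,w\}$ of color $c$, the local permutation at $v$ sends $c$ to the color of $\gamma\{v,w\}$, and so does the local permutation at $w$; hence $\sigma(\gamma,v)(c)=\sigma(\gamma,w)(c)$ whenever $v,w$ are adjacent along an edge of color $c$. Consequently, if $\sigma(\gamma,w)=\mathrm{id}$ for all $w\neq v$, then $\sigma(\gamma,v)$ fixes the color of every edge at $v$, i.e.\ $\sigma(\gamma,v)=\mathrm{id}$. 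There is no automorphism with a single non-trivial local permutation, so your induction step collapses. (Relatedly, the transitivity of $\alpha(F')$ on $F'/F$ that you extract from the hypothesis that $F'$ preserves the $F$-orbits is automatic; that hypothesis is needed elsewhere, as below.)

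The repair is precisely the paper's construction. Choose $v_0$ in the support farthest from $e$, let $a$ be the color of the edge $e_0$ from $v_0$ toward $e$, and ask the clearing element $h$ to fix the half-tree $T^2$ containing $e$ pointwise; compatibility along $e_0$ then forces $\sigma(h,v_0)$ to fix $a$, so one needs $\sigma\in F_a'$ with $\alpha(\sigma)\cdot f_{v_0}=0$, and this is exactly where the factorization $F'=FF_a'$ (equivalently, orbit preservation) is used. Moreover one cannot put the identity on the remaining half-trees at $v_0$: on each half-tree $T^{1,b}$, $b\neq a$, one must take the constant local permutation $\sigma_b\in F$ with $\sigma_b(b)=\sigma(b)$ (such $\sigma_b$ exists again because $F'$ preserves the $F$-orbits), which makes the prescribed local data compatible and yields a genuine $h\in G(F,F')$ fixing $v_0$, hence lying in $G(F,F')^\ast$ and fixing $e$. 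Since $\alpha(F)$ fixes $0$ and, by the farthest-vertex choice, all lamps in $T^1\setminus\{v_0\}$ already vanish, the support drops by one and the induction proceeds; your concluding reduction of cocompactness and of transitivity (when $F$ is transitive) to the orbit structure of $U(F)^\ast$ on $E_d$ is then the same as in the paper.
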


\begin{proof}
We show that for every vertex $x = \left( (f_v), e \right)$ of $X_{n,d}$, there is $g \in G(F,F')^\ast$ such that $g \cdot x = \left( (0), e \right)$. Since $U(F)$ preserves the vertices of this form, and since the number of orbits of $U(F)^\ast \leq G(F,F')^\ast$ on $E_d$ is finite and is equal to one when $F$ is transitive \cite[\S 3.2]{BM-IHES}, statement \ref{item-g(f,f')-trans-X} will follow.

We argue by induction on the cardinality $N$ of the support of $(f_v)$. There is nothing to show if $N=0$. Assume $N \geq 1$, and let $v_0 \in V_d$ with $f_{v_0} \neq 0$ and such that $v_0$ maximizes the distance from $e$ among vertices $v$ such that $f_v \neq 0$. Let $e_0$ be the edge emanating from $v_0$ toward $e$ (if $v_0$ belongs to $e$ then $e_0=e$), and let $a \in \Omega$ be the color of $e_0$. We also denote by $T^1$ and $T^2$ the two half-trees defined by $e_0$, where $T^1$ contains $v_0$. For every $b \in \Omega$, $b \neq a$, we denote by $e_{0,b}$ the edge containing $v_0$ and having color $c(e_{0,b}) = b$, and by $T^{1,b}$ the half-tree defined by $e_{0,b}$ not containing $v_0$. 

By assumption the permutation group $F'$ preserves the $F$-orbits in $\Omega$, so we have $F' = F F_a'$. The subgroup $\alpha(F') \leq \mathfrak{S}_{n}$ being transitive, it follows from the previous decomposition that there exists $\sigma \in F_a'$ such that $\alpha(\sigma) \cdot f_{v_0} = 0$. For every $b \neq a$, we choose $\sigma_b \in F$ such that $\sigma_b(b) = \sigma(b)$, and we consider the unique element $h \in \aut$ whose local permutations are $\sigma(h,v) = 1$ if $v \in T^2$; $\sigma(h,v_0) = \sigma$; and $\sigma(h,v) = \sigma_b$ for every $v \in T^{1,b}$ and every $b \neq a$. It is an easy verification to check that $h$ is a well-defined automorphism of $T_d$, and $h \in G(F,F')$ because all but possibly one local permutations of $h$ are in $F$.

Note that $h$ fixes $e$ by construction. Write $h(x) = ((\phi_v),e)$. We claim that the support of $(\phi_v)$ has cardinality $N-1$. Since $h$ fixes $v_0$, by (\ref{eq:action-g(f,f')}) we have $\phi_{v_0} = \alpha(\sigma(h,v_0)) \cdot f_{v_0} = \alpha(\sigma) \cdot f_{v_0} = 0$. Moreover we also have $\phi_v = f_v$ for every $v$ in $T^2$ because $h$ acts trivially on $T^2$. Finally by the choice of $v_0$ we had $f_v = 0$ for every $v \neq v_0$ in $T^1$, and since $\sigma(h,v) \in F$ for all these $v$ and $\alpha(F)$ fixes $0$, we still have $\phi_v = 0$ for every $v$ in $T^1$, $v \neq v_0$. This proves the claim, and the conclusion follows by induction. 

Statement \ref{item-g(f,f')-stab-X} follows from Lemma \ref{lem-stab-vert-X}, and the last statement follows from \ref{item-g(f,f')-trans-X} and \ref{item-g(f,f')-stab-X} and the fact that $U(F)$ acts freely on $E_d$ when $F$ is semi-regular.
\end{proof}

Propositions \ref{prop-embed-g(f,f')-wr}-\ref{prop-g(f,f')-trans-som} and Lemma \ref{lem-stand-act} imply Theorem \ref{thm-coc-and-graph}. Note that if $\varphi$ is the map defined in Proposition \ref{prop-embed-g(f,f')-wr}, then the composition of $\varphi$ with the projection to $\aut$ is just the trivial inclusion of $G(F,F')$ in $\aut$, so that $G(F,F')$ is indeed irreducible in $G_{n,d}$.

Note that when $F$ is regular, we have an explicit description of a generating subset of the group $G(F,F')^\ast$ whose associated Cayley graph is $X_{n,d}$. For, fix an edge $e_0 \in E_d$, whose color is $a \in \Omega$ and whose vertices are $v_0,v_1$, and denote $x_0 = ((0),e_0) \in X_{n,d}$. For $i=0,1$, let $S_i$ be the set of $\gamma \in G(F,F')$ fixing $v_i$ and such that $\sigma(\gamma,v) \in F$ for every $v \neq v_i$, and $\sigma(\gamma,v_i)$ is non-trivial and belongs to $F \cup F_a'$. Then $S = S_0 \cup S_1$ generates $G(F,F')^\ast$, and $\mathrm{Cay}(G(F,F')^\ast,S) \rightarrow X_{n,d}$, $\gamma \mapsto \gamma x_0$, is a graph isomorphism. Moreover neighbours of type 1 (resp.\ type 2) of a vertex of $X_{n,d}$ are labeled by elements $s \in S_i$ such that $\sigma(\gamma,v_i) \in F$ (resp.\ $\sigma(\gamma,v_i) \in F_a'$). 

\section{Further comments} \label{sec-comments}

\subsection{Variations of the graphs} \label{subsec-variations}

There are possible variations in the definition of the graph $X_{n,d}$. If $\mathcal{K}_n$ is the complete graph on $n$ vertices, let $\mathcal{C}_{n,d}$ be the wreath product of the graphs $\mathcal{K}_n$ and $T_d$: the vertex set is $\Sigma_n^{(V_d)} \times V_d$, and there is an edge between $(f,v)$ and $(f',v')$ if and only if either $f=f'$ and $v,v'$ are adjacent in $T_d$, or $v=v'$ and $f(w)=f'(w)$ if and only if $w \neq v$. Equivalently, $\mathcal{C}_{n,d}$ is the Cayley graph of $C_n \wr W_d$ described in the introduction. Proposition \ref{prop-act-Gnd-X} carries over to this graph, so that the last statement of Theorem \ref{thm-intro-coc-and-graph} indeed follows from the first statement.

The reason why we considered the graph $X_{n,d}$ instead of $\mathcal{C}_{n,d}$ in \S \ref{subsubsec-irr-wr} is to obtain, under the assumption that $F$ is semi-regular, a \textit{free action} of $G(F,F')^\ast$ on the set of vertices. In the case of $\mathcal{C}_{n,d}$, the stabilizer of a vertex in $G(F,F')^\ast$ is finite, but non-trivial. We note that it might be interesting to investigate whether the generalized wreath products of graphs from \cite{Erschler-gen-wp} could provide other kind of interesting groups of automorphisms.

Yet another possibility is to take the same vertex set as $X_{n,d}$, but declaring that there is an edge between $(f,e)$ and $(f',e')$ if $e \neq e'$ share a vertex $w$ and $f_v = f_v'$ for every $v \neq w$. This graph $Z_{n,d}$ has larger degree, namely $2(d-1)n$. Again all the results proved above for $X_{n,d}$ remain true. In the case $d=2$, one may check that $Z_{n,2}$ is the Diestel-Leader graph $\mathrm{DL}(n,n)$, so that $Z_{n,d}$ may be thought of as \enquote{higher dimensional} versions of these graphs.

\subsection{Additional remarks}

We end the article by observing that the embedding from Proposition \ref{prop-embed-g(f,f')-wr} also provides examples countable ICC groups which are cocompact lattices in a strongly amenable locally compact group, and discuss this phenomenon in connection with the recent work \cite{Fr-Ta-VaFe}.

In the sequel we use the notation of the previous sections. We fix a vertex $v_0$ of the tree $T_d$, and denote by $K_d$ the stabilizer of $v_0$ in $\aut$. We also denote by $H_{n,d}$ the stabilizer of $v_0$ in $G_{n,d}$ for the projection action on $T_d$, i.e.\ $H_{n,d} = \mathfrak{S}_{n}^{V_d, \mathfrak{S}_{n-1}} \rtimes K_d$; and by $K(F,F')$ the stabilizer of $v_0$ in the group $G(F,F')$. Since $H_{n,d}$ is an open subgroup $G_{n,d}$, and since intersecting a cocompact lattice $\Gamma$ of a group $G$ with an open subgroup $O$ of $G$ provides a cocompact lattice of $O$, Propositions \ref{prop-embed-g(f,f')-wr}-\ref{prop-g(f,f')-trans-som} imply:

\begin{prop} \label{prop-K(F,F')-lattice}
Let $F \leq F' \leq \Sy$ such that $F$ is semi-regular, and let $n$ be the index of $F$ in $F'$. Then the group $K(F,F')$ embeds as a cocompact lattice in the group $H_{n,d}$.
\end{prop}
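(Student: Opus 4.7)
My plan is to deduce this from the cocompact lattice embedding $\varphi \colon G(F,F') \hookrightarrow G_{n,d}$ of Propositions \ref{prop-embed-g(f,f')-wr} and \ref{prop-g(f,f')-trans-som}, by intersecting $\varphi(G(F,F'))$ with the open subgroup $H_{n,d}$ and checking that the result equals $\varphi(K(F,F'))$ and remains a cocompact lattice.

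First I would note that $H_{n,d}$ is open in $G_{n,d}$: the projection $p \colon G_{n,d} \to \aut$ is continuous, $K_d$ is open in $\aut$, and $H_{n,d} = p^{-1}(K_d)$. Given this, an element $\varphi(\gamma) = (\rho_{\gamma},\gamma)$ lies in $H_{n,d}$ exactly when $\gamma \in K_d$, i.e.\ when $\gamma$ fixes $v_0$; hence $\varphi(G(F,F')) \cap H_{n,d} = \varphi(K(F,F'))$, and this intersection is automatically discrete in $H_{n,d}$ since $\varphi(G(F,F'))$ is discrete in $G_{n,d}$ (which uses that $F$ is semi-regular).

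The core step is the general fact that intersecting a cocompact lattice $\Gamma \leq G$ with an open subgroup $O \leq G$ always yields a cocompact lattice in $O$. I would justify this as follows. The double coset set $\Gamma \backslash G / O$ is automatically finite: writing $G = \Gamma K$ for some compact $K \subseteq G$, the image of $K$ in the discrete space $G/O$ is a finite set whose $\Gamma$-translates cover $G/O$. Writing $G = \bigsqcup_{i=1}^{k} \Gamma g_i O$ then exhibits $\Gamma \backslash G$ as a disjoint union of finitely many open pieces $\Gamma g_i O / \Gamma$, so each of them is clopen in the compact Hausdorff space $\Gamma \backslash G$, hence compact. Finally, the quotient map $G \to \Gamma \backslash G$ identifies $O/(\Gamma \cap O)$ homeomorphically with the clopen piece $\Gamma O / \Gamma$, so $O/(\Gamma \cap O)$ is compact, yielding the cocompact lattice property.

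The main obstacle, if one calls it that, is really just the topological bookkeeping for this general intersection principle in the locally compact group $G_{n,d}$, where one has to keep track of the fact that the topology comes from a semi-restricted wreath product. Once that principle is granted, the proposition reduces to the two identifications $H_{n,d} = p^{-1}(K_d)$ and $\varphi(G(F,F')) \cap H_{n,d} = \varphi(K(F,F'))$ already recorded above and requires no further input.
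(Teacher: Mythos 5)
Your argument is correct and matches the paper's own reasoning: the paper likewise deduces the proposition from Propositions \ref{prop-embed-g(f,f')-wr}--\ref{prop-g(f,f')-trans-som} by observing that $H_{n,d}$ is open in $G_{n,d}$ and invoking the general fact that a cocompact lattice meets an open subgroup in a cocompact lattice of that subgroup. The only difference is that you also supply the (correct) double-coset proof of this general fact, which the paper takes for granted.
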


Observe that the abelian group $\oplus_{V_d} C_n$ embeds as a cocompact lattice of $\mathfrak{S}_{n}^{V_d, \mathfrak{S}_{n-1}}$, and hence as a cocompact lattice of $H_{n,d}$ since $K_d$ is a compact group. In particular the group $H_{n,d}$ is an example of a locally compact group with two cocompact lattices, one of which is abelian, and the other is an ICC group (recall that a group is ICC if every non-trivial conjugacy class is infinite). Indeed:

\begin{prop} \label{prop-K(F,F')-icc}
Let $F \lneq F' \leq \Sy$. Then $K(F,F')$ is an ICC group.
\end{prop}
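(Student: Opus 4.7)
My plan is, for each non-trivial $\gamma \in K(F,F')$, to exhibit a subtree $T \subseteq T_d$ such that the rigid stabilizer $R$ of $T$ inside $K(F,F')$ is infinite and $\gamma T$ is disjoint from $T$. Under these two conditions, any $h \in R$ commuting with $\gamma$ must satisfy $h = \gamma h \gamma^{-1}$, an element supported both inside $T$ and inside $\gamma T$, and hence $h = 1$. Consequently distinct elements of $R$ produce distinct conjugates $h\gamma h^{-1}$, so the conjugacy class of $\gamma$ in $K(F,F')$ is infinite.

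To locate such a $T$, I first find a vertex $u$ fixed by $\gamma$ with $\pi := \sigma(\gamma,u) \neq 1$. Since $\gamma \neq 1$ fixes $v_0$, a descent argument along fixed vertices works: if every vertex fixed by $\gamma$ had trivial local permutation, then $\gamma$ would be color-preserving and fix $v_0$, hence would equal the identity. Pick a color $a \in \Omega$ with $\pi(a) \neq a$, and let $v$ and $v' = \gamma v$ be the color-$a$ and color-$\pi(a)$ neighbors of $u$. Set $T := T_v$, the half-tree hanging off $v$ on the side opposite to $u$; by construction $v_0 \notin T_v$ and $\gamma T_v = T_{v'}$ is disjoint from $T_v$.

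The heart of the argument is to establish that $R = \{h \in K(F,F') : h(w) = w \text{ for every } w \in V_d \setminus T_v\}$ is infinite. This is where the standing hypothesis that $F'$ preserves the $F$-orbits in $\Omega$ is decisive. Together with $F \leq F'$, it forces the $F$- and $F'$-orbits on $\Omega$ to coincide, and the orbit--stabilizer formula then yields $|F'_c| = |F_c| \cdot |F'|/|F|$ for every $c \in \Omega$; combined with $F \lneq F'$, this gives $F'_c \neq 1$ for every color $c$. Picking $\sigma \in F'_a \setminus \{1\}$, the automorphism of $T_d$ fixing every vertex outside $T_v$ with $\sigma(h,v) = \sigma$ and $\sigma(h,w) = 1$ for all $w \neq v$ is well-defined (because $\sigma$ fixes the color $a$ of $[u,v]$, so $h$ fixes $v$), belongs to $G(F,F') \cap K_d = K(F,F')$, and hence to $R$. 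Infinitely many further elements of $R$ are then produced by additionally placing arbitrary $F$-valued local permutations at finitely many further vertices of $T_v$.

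The main obstacle is precisely verifying the infinitude of $R$: if one had $F'_a = \{1\}$, no non-trivial automorphism supported on $T_v$ and fixing its root could lie in $G(F,F')$, and the plan would collapse. The orbit-preservation hypothesis on $(F,F')$, together with the strict inclusion $F \lneq F'$, is exactly what rules out this pathology; the rest of the argument is a routine manipulation of the cocycle identity and the disjointness of $T_v$ and $T_{v'}$.
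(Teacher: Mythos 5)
Your overall strategy coincides with the paper's: exhibit a half-tree $T$ not containing $v_0$ with $\gamma T \cap T = \emptyset$, show that the pointwise fixator $R \leq K(F,F')$ of the complement of $T$ is infinite, and deduce that the conjugacy class of $\gamma$ is infinite. The preliminary steps are fine (existence of a fixed vertex $u$ with $\sigma(\gamma,u) \neq 1$, disjointness of $T_v$ and $\gamma T_v = T_{v'}$, the injectivity of $h \mapsto h\gamma h^{-1}$ on $R$, and the orbit--stabilizer observation that orbit preservation plus $F \lneq F'$ forces $F'_c \neq 1$ for every color $c$). The gap is in the construction that is supposed to make $R$ infinite: there is \emph{no} automorphism $h$ of $T_d$ with $\sigma(h,v) = \sigma \neq 1$ and $\sigma(h,w) = 1$ for all $w \neq v$. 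Indeed, for any edge $e = \{x,y\}$ of color $b$, both $\sigma(h,x)(b)$ and $\sigma(h,y)(b)$ equal the color of $h(e)$, so local permutations at adjacent vertices must agree on the color of the joining edge. Since $\sigma$ fixes $a$ but moves some color $b \neq a$, the color-$b$ neighbour of $v$ inside $T_v$ would need a local permutation sending $b$ to $\sigma(b) \neq b$, hence nontrivial, and this nontriviality propagates along an infinite ray: any nontrivial automorphism fixing a vertex has infinitely many nontrivial local permutations. Your parenthetical justification (``$\sigma$ fixes the color $a$ of $[u,v]$, so $h$ fixes $v$'') only checks compatibility at the edge $[u,v]$, not at the edges from $v$ into $T_v$; the same objection invalidates ``placing arbitrary $F$-valued local permutations at finitely many further vertices.''

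The repair is exactly the device the paper uses in the proof of Proposition \ref{prop-g(f,f')-trans-som}: since $F'$ preserves the $F$-orbits, for each $b \neq a$ one may choose $\sigma_b \in F$ with $\sigma_b(b) = \sigma(b)$, and then take the (well-defined) automorphism $h$ with $\sigma(h,v) = \sigma$, $\sigma(h,w) = \sigma_b$ for every $w$ in the half-tree hanging at the color-$b$ edge of $v$ inside $T_v$, and $\sigma(h,w) = 1$ for $w \notin T_v$. This $h$ fixes the complement of $T_v$ (so lies in $K(F,F')$ because $v_0 \notin T_v$), belongs to $G(F,F')$ since only the local permutation at $v$ may leave $F$, and is nontrivial; repeating the construction at infinitely many vertices inside $T_v$ (with the color of the edge towards $v$ playing the role of $a$) yields the required infinitude of $R$. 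With this correction your argument is complete and is essentially the paper's proof, the only cosmetic difference being that the paper takes the half-tree $T(v)$ over a vertex $v$ moved by $\gamma$, viewed from $v_0$, and phrases the conclusion via centralizers ($\gamma\Lambda\gamma^{-1} \cap \Lambda = 1$ with $\Lambda$ infinite), while you locate the half-tree via a fixed vertex with nontrivial local permutation and count conjugates directly.
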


\begin{proof}
This is easy. If $\gamma$ is a non-trivial element, then it moves a vertex $v$ of the tree. If $T(v)$ is the subtree consisting of vertices $w$ such that the geodesic $[v_0,w]$ contains $v$, then $\gamma T(v) \cap T(v)$ is empty since $\gamma$ also fixes $v_0$. So if $\Lambda$ is the subgroup of $K(F,F')$ that fixes pointwise the complement of $T(v)$, then $\gamma \Lambda \gamma^{-1} \cap \Lambda = 1$. But $\Lambda$ is easily seen to be infinite since $F \neq F'$, and it follows that $\gamma$ cannot centralize a finite index subgroup of $\Lambda$. So the centralizer of $\gamma$ cannot have finite index in $K(F,F')$, and $\gamma$ has an infinite conjugacy class.
\end{proof}

Recall from \cite{Gl-PF} that a group $G$ is \textbf{strongly amenable} if every proximal action of $G$ on a compact space $X$ has a fixed point. An action of $G$ on $X$ is proximal if the closure of every orbit in $X \times X$ intersects the diagonal. 

\begin{prop} \label{prop-H-sa}
For every $n,d$, the group $H_{n,d}$ is strongly amenable.
\end{prop}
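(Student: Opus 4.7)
The plan is to exploit the extension $1 \to B \to H_{n,d} \to K_d \to 1$, with $B = \mathfrak{S}_n^{V_d, \mathfrak{S}_{n-1}}$ and $K_d$ compact, to reduce strong amenability to two pieces: the (easy) strong amenability of the compact group $K_d$, and the triviality of the $B$-action on any minimal proximal $H_{n,d}$-flow.

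First I would recall that compact groups are strongly amenable. If $K$ is compact and $K \acts X$ is proximal, then given $x, y \in X$ proximality produces $k_n \in K$ with $k_n x, k_n y \to z$; by compactness we may extract a convergent subsequence $k_n \to k$, giving $kx = ky = z$ and hence $x = y$. In particular, any proximal $K_d$-action is trivial.

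Second, I would try to show that $B$ acts trivially on any minimal proximal $H_{n,d}$-flow $X$; once this is granted, the $H_{n,d}$-action on $X$ descends to a proximal $K_d$-action, which is trivial by the first step, so $|X| = 1$. The strategy for showing $B$ acts trivially exploits that $B$ is a directed union of compact open subgroups $C_F = \mathfrak{S}_n^F \times \mathfrak{S}_{n-1}^{V_d \setminus F}$ for finite $F \subset V_d$, that $K_d$ has finite orbits on $V_d$ (being compact and acting continuously on a discrete set), and that the wreath-product conjugation formula constrains how much the conjugacy class of an element can spread. A careful Ellis-semigroup argument, combining proximality of pairs $(x, bx)$ with the compact-open subgroup structure, should show that each $b \in B$ has a fixed point on $X$, which together with normality of $B$ and minimality then upgrades to triviality.

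The main obstacle is precisely this second step: unlike in the discrete Frisch--Tamuz--Vahidi Ferdowsi setting (where FC-hypercentrality suffices), the $H_{n,d}$-conjugacy class of a general $b \in B$ is not precompact, since conjugating by elements of $B$ with arbitrary finite co-support can spread the location where $b$ acts non-trivially throughout $V_d$; moreover $K_d$ itself has no non-trivial element moving only finitely many vertices of $T_d$, so the topological FC-center of $H_{n,d}$ equals $\oplus_{V_d} \mathfrak{S}_n$ and fails to be dense. A more sophisticated argument than the basic FC-center reasoning is therefore required, and I expect the paper to invoke a suitable locally compact analog of Frisch--Tamuz--Vahidi Ferdowsi, or related tools in the spirit of \cite[Prop.~6.8]{LB-am-urs-lat}, as the key input.
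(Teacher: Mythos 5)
Your outline stalls exactly at its central step, and that step is the whole content of the proposition: you never prove that $B=\mathfrak{S}_n^{V_d,\mathfrak{S}_{n-1}}$ acts trivially (or with a fixed point) on a proximal $H_{n,d}$-flow, you only sketch hopes for ``a careful Ellis-semigroup argument'' or ``a locally compact analog of Frisch--Tamuz--Vahidi Ferdowsi.'' Moreover your diagnosis of the obstacle is misplaced. You worry that the $H_{n,d}$-conjugacy classes of elements of $B$ are not precompact and that the topological FC-centre of $H_{n,d}$ is not dense; but no FC-type control \emph{relative to the whole group} $H_{n,d}$ is needed. The missing idea is to first restrict the flow to the subgroup $B$: since $B$ is cocompact in $H_{n,d}$ (the quotient is the compact group $K_d$), a proximal $H_{n,d}$-action restricts to a proximal $B$-action \cite[II.3.1]{Gl-PF}. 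From there everything happens inside $B$: the subgroup $\oplus_{V_d}\mathfrak{S}_n$ of finitely supported functions is dense in $B$, its action on $X$ is still proximal, and as a discrete group it is an FC-group (conjugation preserves supports and $\mathfrak{S}_n$ is finite), hence strongly amenable by Glasner's classical results \cite[II.3.2-II.4.1]{Gl-PF} --- no locally compact analog of \cite{Fr-Ta-VaFe} is required. This produces a point $x$ fixed by $\oplus_{V_d}\mathfrak{S}_n$, hence by $B$ by density and continuity.

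The endgame also differs from what you propose, and is simpler: rather than descending to a proximal $K_d$-action on a quotient (which would require knowing $B$ acts trivially on all of $X$, not just fixes a point), note that by proximality $B$ has at most one fixed point, and since $B$ is normal in $H_{n,d}$ its fixed-point set is $H_{n,d}$-invariant; therefore the unique $B$-fixed point $x$ is fixed by all of $H_{n,d}$. This gives a fixed point in every proximal $H_{n,d}$-flow directly, with no need for minimality and no need for the (correct, but here unnecessary) observation that compact groups are strongly amenable. So while your first step is fine, the proposal as it stands has a genuine gap at the second step, and the tools you expect to need are heavier than what actually closes it.
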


\begin{proof}
Let $X$ be a compact $H_{n,d}$-space that is proximal. Since the subgroup $\mathfrak{S}_{n}^{V_d, \mathfrak{S}_{n-1}}$ is cocompact in $H_{n,d}$, its action on $X$ is also proximal \cite[II.3.1]{Gl-PF}. In particular the subgroup $\oplus \mathfrak{S}_{n}$, which is dense in $\mathfrak{S}_{n}^{V_d, \mathfrak{S}_{n-1}}$, also has a proximal action on $X$. But $\oplus \mathfrak{S}_{n}$ is an FC-group, and hence is strongly amenable \cite[II.3.2-II.4.1]{Gl-PF}. So there is a point $x$ in $X$ that is fixed by $\oplus \mathfrak{S}_{n}$, and hence also by $\mathfrak{S}_{n}^{V_d, \mathfrak{S}_{n-1}}$ by density. Since $\mathfrak{S}_{n}^{V_d, \mathfrak{S}_{n-1}}$ cannot have more than one fixed point by proximality, and since $\mathfrak{S}_{n}^{V_d, \mathfrak{S}_{n-1}}$ is a normal subgroup of $H_{n,d}$, it follows that $x$ is actually fixed by the entire $H_{n,d}$. So $H_{n,d}$ has a fixed point in $X$, and we are done.
\end{proof}

According to \cite{Fr-Ta-VaFe}, a countable ICC group is not strongly amenable. This applies to any group $K(F,F')$ where $F$ is semi-regular by Proposition \ref{prop-K(F,F')-icc}. Nonetheless by Proposition \ref{prop-K(F,F')-lattice} $K(F,F')$ embeds as a cocompact lattice in $H_{n,d}$ for $n = (F':F)$, and the latter is strongly amenable by Proposition \ref{prop-H-sa}. This shows:

\begin{prop} \label{prop-strong-am-coco}
The property of being strongly amenable does not pass from a locally compact group to a discrete cocompact subgroup.
\end{prop}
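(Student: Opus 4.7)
The plan is simply to assemble the four ingredients already collected just above the statement. Concretely, I would first fix any pair of permutation groups $F \lneq F' \leq \Sy$ with $F$ semi-regular (for instance $d \geq 3$ and $F$ the regular action of a cyclic group $C_d$ inside $F' = \Syd$, or any similar choice), and set $n = (F':F)$. This guarantees that $K(F,F')$ is a well-defined countable group sitting inside the locally compact group $H_{n,d}$.

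Next I would invoke Proposition \ref{prop-K(F,F')-lattice} to record that $K(F,F')$ is a discrete cocompact subgroup of $H_{n,d}$, and Proposition \ref{prop-H-sa} to note that $H_{n,d}$ is strongly amenable. The condition $F \lneq F'$ then lets me apply Proposition \ref{prop-K(F,F')-icc}, so that $K(F,F')$ is an ICC group. Finally, I would quote the theorem of Frisch--Tamuz--Vahidi Ferdowsi from \cite{Fr-Ta-VaFe} asserting that a countable ICC group cannot be strongly amenable, and conclude that $K(F,F')$ fails to be strongly amenable while its cocompact envelope $H_{n,d}$ is.

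The only genuinely substantive point is the last one, and it is imported from \cite{Fr-Ta-VaFe}; the rest is bookkeeping. Consequently there is no real obstacle to overcome here. I would just make sure, when writing, that the choice of $F \lneq F'$ with $F$ semi-regular is exhibited explicitly (or left to the reader with a reference back to \S \ref{subsec-thegroups}), so that the reader is convinced that the family of examples supplied by $K(F,F') \hookrightarrow H_{n,d}$ is non-empty, and then state the conclusion in exactly the form of the proposition.
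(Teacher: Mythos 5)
Your proposal is correct and follows exactly the paper's own argument: the paper likewise deduces the proposition by combining Proposition \ref{prop-K(F,F')-lattice}, Proposition \ref{prop-H-sa}, Proposition \ref{prop-K(F,F')-icc}, and the Frisch--Tamuz--Vahidi Ferdowsi theorem that countable ICC groups are not strongly amenable. The only addition you make is spelling out an explicit choice of $F \lneq F'$ with $F$ semi-regular, which the paper leaves implicit.
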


This contrasts with the case of discrete groups, for which strong amenability is inherited by subgroups, as follows from the main result of \cite{Fr-Ta-VaFe}.

As a final remark, we also observe that the above example of the group $H_{n,d}$ with an abelian cocompact lattice and an ICC cocompact lattice, also shows that two cocompact lattices in the same locally compact group may be such that one has the Choquet-Deny property while the other does not. For background and recent developments on the Choquet-Deny property, we refer to \cite{FHTP-Choquet}. 

\bibliographystyle{amsalpha}
\bibliography{simpleandwreath}

\end{document}